\newtheorem{thm}{Theorem}[section]
 \newtheorem{lem}[thm]{Lemma}
 \newtheorem{prop}[thm]{Proposition}
  \newtheorem*{thmA}{Theorem A}
\numberwithin{equation}{section}
 \theoremstyle{definition}
  \newtheorem{defn}[thm]{Definition}
 \theoremstyle{remark}
 \newtheorem{rem}[thm]{Remark}
  \newtheorem{ex}[thm]{Example}
\def\X{\mathfrak{X}}
\def\Xl{\hat{\mathfrak{X}}}
\def\BXXl{\mathfrak{B}(\mathfrak{X},\hat{\mathfrak{X}}^*)}
\def\BX{\mathfrak{B}(\mathfrak{X})}
\def\KX{\mathfrak{K}(\mathfrak{X})}
\def\BXl{\mathfrak{B}(\hat{\mathfrak{X}}^*)}
\def\LE{L^1(X;E)}
\def\LpE{L^p(X;E)}
\def\BLpE{\mathfrak{B}(L^p(X;E))}
\def\LqEst{L^q(X;E^*)}
\def\Bd{\mathfrak{B}(L^p(X;E),L^q(X;E^*)^*)}
\def\d{\mathrm{d}}
\def\supp{\mathrm {supp}}
\def\andx{\quad\mbox{~and~}\quad}
\def\Roe{\mathrm{Roe}(X,B)}
\def\K{\mathcal{K}(X,B)}
\def\VL{\mathrm{VL}(X)}
\def\VLinf{\mathrm{VL}_\infty(X)}
\def\N{\mathcal N}
\def\Xs{\mathcal X}
\def\Ys{\mathcal Y}
\def\E{\mathcal E}
\def\G{\mathfrak{G}}
\def\l{\langle}
\def\r{\rangle}
\def\W*OT{\mathrm{W^*OT}}
\def\Commut{\mathrm{Commut}}
\begin{document}

\title[A quasi-local characterisation of $L^p$-Roe algebras]{A quasi-local characterisation of $L^p$-Roe algebras}

\author{Kang Li$^{1}$}
\address{Institute of Mathematics of the Polish Academy of Sciences, ul. \'{S}niadeckich 8, 00-656 Warsaw, Poland.}
\email{kli@impan.pl}

\author{Zhijie Wang$^{2}$}
\address{College of Mathematics Physics and Information Engineering, Jiaxing University,
Yuexiu Road (South) 56, 314001 Zhejiang, China.}

\email{wangzhijie112@gmail.com}
\author{Jiawen Zhang$^{3}$}
\address{School of Mathematics, University of Southampton, Highfield SO17 1BJ, United Kingdom.}
\email{jiawen.zhang@soton.ac.uk}

\thanks{{$^{1}$} Supported by the Danish Council for Independent Research (DFF-5051-00037), Deutsche Forschungsgemeinschaft (SFB 878, Groups, Geometry and Actions) and  the European Research Council (ERC-677120)}

\thanks{{$^{2}$} Supported by NSFC (No. 11501249)}
\thanks{{$^{3}$} Supported by the Sino-British Trust Fellowship by Royal Society, International Exchanges 2017 Cost Share (China) grant EC$\backslash$NSFC$\backslash$170341, and NSFC11871342.}

\keywords{Quasi-local operators, $L^p$-Roe algebras, straight finite decomposition complexity}

\baselineskip=16pt

\begin{abstract}
Very recently, \v{S}pakula and Tikuisis provide a new characterisation of (uniform) Roe algebras via quasi-locality when the underlying metric spaces have straight finite decomposition complexity. In this paper, we improve their method to deal with the $L^p$-version of (uniform) Roe algebras for any $p\in [1,\infty)$. Due to the lack of reflexivity on $L^1$-spaces, some extra work is required for the case of $p=1$.

\end{abstract}
\maketitle

\parskip 4pt

\noindent\textit{Mathematics Subject Classification} (2010): 20F65, 46H35, 47L10.\\

\section{Introduction}
(Uniform) Roe algebras are $C^*$-algebras associated to metric spaces, which reflect coarse properties of the underlying metric spaces. These algebras have been well-studied and have fruitful applications, among which the most important ones would be the (uniform) coarse Baum-Connes conjecture and the Novikov conjecture (e.g., \cite{MR1905840, MR2523336, Yu95, MR1451759, MR1626745, Yu00}). Meanwhile, they also provide a link between coarse geometry of metric spaces and the theory of $C^*$-algebras (e.g., \cite{ALLW17, MR1876896, MR1739727, MR3158244, LL, LW18, MR1763912, MR2873171, Scarparo:2016kl, MR1905840, MR2800923, WZ10}), and turn out to be useful in the study of topological phases of matter (e.g., \cite{Kub,EwertMeyer}) as well as the theory of limit operators in the study of Fredholmness of band-dominated operators (e.g., \cite{MR3451966, MR3212726, MR3151282, vspakula2017metric}).

By definition, the (uniform) Roe algebra of a proper metric space $X$ is the norm closure of all bounded locally compact operators $T$ with \emph{finite propagation} in the sense that there exists $R>0$ such that for any $f,g\in C_b(X)$ acting on $L^2(X)$ by pointwise multiplication, we have $fTg=0$ provided their supports are $R$-separated (i.e., the distance between the supports of $f$ and $g$ is at least $R$). Since general elements in (uniform) Roe algebras may not have finite propagation, it is usually difficult to tell what operators exactly belong to them. On the other hand, Roe \cite{MR1399087} defined an asymptotic version of finite propagation as follows: An operator $T$ on $L^2(X)$ has \emph{finite $\varepsilon$-propagation} for $\varepsilon>0$, if there is $R>0$ such that for any $f,g\in C_b(X)$, we have $\|fTg\|\leq \varepsilon \|f\|\cdot\|g\|$ provided their supports are $R$-separated. Operators with finite $\varepsilon$-propagation for all $\varepsilon>0$ are called \emph{quasi-local} in \cite{MR918459}. It is clear that limits of finite $\varepsilon$-propagation operators still have finite $\varepsilon$-propagation. Consequently, all operators in (uniform) Roe algebras are quasi-local.

A natural question is that whether the converse holds as well, i.e., does every locally compact quasi-local operator belong to the (uniform) Roe algebra? An affirmative answer to this question would provide a new approach to detect what operators belong to these algebras in a more practical way by estimating the norms of operator-blocks far from strips around the diagonal, and it has several immediate consequences including the followings.

The first one has its root in Engel's work \cite[Section~2]{Engel18}, where he studied the index theory of pseudo-differential operators. He showed that the indices of uniform pseudo-differential operators on Riemannian manifolds are quasi-local, while it is unclear to him whether they live in Roe algebras, which are well-understood. Another application is in the work of White and Willett \cite{WW} on Cartan subalgebras of uniform Roe algebras. They showed that if two uniform Roe algebras of bounded geometry metric spaces with Property A are $*$-isomorphic, then the underlying metric spaces are bijectively coarsely equivalent provided that every quasi-local operator belongs to the uniform Roe algebras.

Historically, this question has been studied and partially addressed by many people including Lange and Rabinovich for $X=\mathbb{Z}^n$ \cite{MR790430} (in fact they worked in a more general context, see the next paragraph), Engel for $X$ is a manifold of bounded geometry with polynomial volume growth \cite{Engel15}, \v{S}pakula and Tikuisis \cite{spakula2017relative} for $X$ has straight finite decomposition complexity in the sense of \cite{dranishnikov2014asymptotic}. To our best knowledge, this question is still open for general metric spaces.

Based on the original definitions, various versions of Roe algebras are proposed and studied by different purposes. In fact, in recent years there has been an uptick in interest in the $L^p$-version of (uniform) Roe algebras for $p\in [1,\infty)$, from the communities of both limit operator theory and coarse geometry (e.g. \cite{MR3151282, MR3212726, vspakula2017metric, MR3451966, chung2018rigidity, Zhang18}). And it is natural and important to study the same question in this context, i.e., does every locally compact and quasi-local operator belong to the $L^p$-version of (uniform) Roe algebras for $p\in [1,\infty)$?

In this paper, we improve the method of \v{S}pakula and Tikuisis \cite{spakula2017relative} in order to generalise their result from the case of $p=2$ to any $p\in [1,\infty)$. The main part of our result is the following (see Theorem~\ref{char for Roe alg Thm} for the complete version), which answers the $L^p$-version of the question above under the condition that the underlying metric space has straight finite decomposition complexity. 

\begin{thmA}
For a proper metric space with straight finite decomposition complexity and $p \in [1,+\infty)$, quasi-locality is equivalent to being in the associated $L^p$-Roe-like algebra.
\end{thmA}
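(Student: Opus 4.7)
The easy direction is immediate: finite propagation operators on $\LpE$ are quasi-local, and being quasi-local is preserved under norm limits, so every element of the $L^p$-Roe-like algebra is quasi-local. For the hard direction I would mimic the strategy of \v{S}pakula and Tikuisis, replacing Hilbert-space tools by $L^p$-substitutes and proving the result by induction on the length of the SFDC decomposition of $X$. The base case is when $X$ is uniformly bounded: here every bounded operator trivially has finite propagation, so there is nothing to prove.

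For the inductive step, suppose a quasi-local operator $T\in\BLpE$ and $\varepsilon>0$ are given. Using SFDC at a sufficiently large scale $R$, I would write $X=Y^{(0)}\sqcup Y^{(1)}$, with each $Y^{(i)}$ an $R$-disjoint union $\bigsqcup_j X^{(i)}_j$ of subspaces lying in the previous class of the decomposition. Multiplying $T$ on both sides by characteristic functions of the $X^{(i)}_j$ (which act as contractions on $\LpE$ for every $p\in[1,\infty)$), I obtain a decomposition of $T$ into diagonal compressions to each $X^{(i)}_j$ plus off-diagonal cross terms; the off-diagonal part has norm at most $\varepsilon$ because $R$ was chosen according to the quasi-locality modulus of $T$ and the components are mutually $R$-separated. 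Each diagonal piece restricts to a quasi-local operator on $L^p(X^{(i)}_j;E)$ with modulus bounded in terms of that of $T$, and hence, by the inductive hypothesis, can be approximated in norm by a finite propagation operator on $X^{(i)}_j$. Assembling these approximations in a uniformly locally finite manner produces a finite propagation operator on $X$ within $2\varepsilon$ of $T$.

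The main obstacle is the case $p=1$. For $p>1$ the reflexivity of $L^p$ permits the standard weak-$*$ compactness arguments inside $\BLpE$, and the proof can be carried out there. For $p=1$ one cannot stay inside $\BLE$: $L^1(X;E)$ is not reflexive, and weak-$*$ limits of uniformly bounded operators naturally live in the larger space $\BLCE$ appearing in the preamble. The delicate task will be to develop the analogue of the \v{S}pakula--Tikuisis approximation scheme in this enlarged setting, identify a natural weak-$*$ topology on $\BLCE$ under which finite propagation and quasi-locality are well-behaved (so that weak-$*$ limits of finite propagation operators retain controlled propagation), and finally verify that the approximations land back in $\BLE$ rather than merely in $\BLCE$; this is where the extra work announced in the abstract must happen.
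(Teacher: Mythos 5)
The easy direction is fine, but the heart of your inductive step contains a genuine gap: you assert that after cutting $T$ by the characteristic functions of the $R$-disjoint pieces $X^{(i)}_j$, ``the off-diagonal part has norm at most $\varepsilon$ because $R$ was chosen according to the quasi-locality modulus of $T$.'' Quasi-locality only controls each \emph{individual} cross term $\chi_{X^{(i)}_j}T\chi_{X^{(i)}_{j'}}$; the off-diagonal part is an infinite sum of such terms, and the triangle inequality gives no bound on it. Estimating exactly this quantity, i.e.\ showing $\big\|eae-\sum_j e_jae_j\big\|\leq\varepsilon$ for $a\in\Commut(L,\varepsilon)$ and $2/L$-disjoint cutdowns, is the main technical content of the paper (Proposition~\ref{estimate for cmmt cor}, the analogue of \v{S}pakula--Tikuisis' Corollary~4.3). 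It is proved not by summing cross terms but by an averaging argument: one forms the compact abelian group $G$ of sign-flip multiplication operators, builds a conditional-expectation-type idempotent onto the block-diagonal part by integrating $u^{-(**)}au$ over the Haar measure of $G$ (Lemmas~\ref{unique expectation lemma} and~\ref{construction}), and then bounds $\|\E'(eae)-eae\|$ by $\sup_{u\in G}\|[eae,u]\|$, which quasi-locality does control via a single Lipschitz function realising the signs. Your proposal treats this step as automatic, so as written the argument fails already for $p=2$.

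Two further points. First, your dichotomy ``for $p>1$ reflexivity of $L^p$ lets one work inside $\BLpE$; only $p=1$ forces an enlarged space'' is inaccurate for the theorem as actually stated: $E$ is an arbitrary Banach space, $\LpE$ need not be reflexive, and the duality $L^p(X;E)^*\cong L^q(X;E^*)$ can fail for every $p$. The paper therefore works for all $p$ with the dual pair $\X=\LpE$, $\Xl=\LqEst$ inside $\BXXl$ (using only the isometric embeddings of Lemma~\ref{norm isom}); the special feature of $p=1$ is merely the choice $\Xl=L^0(X;E^*)$, needed because $C_0(X;E^*)$ is not invariant under multiplication by $L^\infty(X)$ while $L^\infty(X;E^*)$ destroys compactness of $G$ in the strong* topology. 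Your plan of taking weak-* limits of finite propagation operators in $\BLCE$ and then ``verifying they land back in $\BLE$'' is not what is needed and has no obvious route to a proof. Second, the iteration cannot literally restrict to subspaces and invoke an inductive hypothesis space by space: sFDC requires fixing the whole sequence of scales $R_1<\cdots<R_m$ in advance (determined by the commutant data $\Commut(L_n,\varepsilon_n)$ for \emph{all} later stages), and the approximants must stay in the given subalgebra $B$ and remain block diagonal with respect to metric families; this is why the paper iterates Lemma~\ref{decomp for operator lemma}, whose clauses (ii) and (iii) record precisely the properties your restriction-and-reassembly step would need but does not verify.
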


Here the notion of \emph{$L^p$-Roe-like algebra} is the $L^p$-analogue of Roe-like algebras \v{S}pakula and Tikuisis introduced for $p=2$ (\cite[Definition~2.3]{spakula2017relative}) and for which their main result is established. However, we would like to point out that our definition of $L^p$-Roe-like algebras are more general than \v{S}pakula and Tikuisis' definition even for $p=2$, as we drop a commutant condition in \cite[Definition~2.3]{spakula2017relative}, which is used in the proof of their main theorem. However, we observe that this condition is redundant for the proof of the main theorem if we replace it with Lemma~\ref{2.2 replacement} below. The reason we drop this condition is inspired by the fact that it is not fulfilled for general $L^1$-Roe-like algebras, and an obvious advantage of doing this is to allow more examples especially in the case of $p=1$ (see Remark~\ref{weak def rem} and  Example~\ref{uniform Roe alg} for more details).

The proof of our main theorem is closely modelled on their original one in \cite{spakula2017relative} at least for $p\in (1,\infty)$, except that the $L^p$-Roe-like algebras need not possess a bounded involution and von Neumann algebra techniques are invalid. Instead, we have to deal with asymmetric situation as in the proof of the implication ``(iii) $\Rightarrow$ (i)" in Theorem~\ref{char for Roe alg Thm} and provide a direct and concrete proof of Lemma~\ref{construction}.

The case of $p=1$ is more complicated and in this case Proposition~\ref{estimate for cmmt cor} is established, which is the most technical part of the paper and is also a generalisation of \cite[Corollary~4.3]{spakula2017relative}. The difficulty comes from the lack of reflexivity on $L^1$-spaces, and the trick of the proof is to consider an artificial space $L^0(X)$, which lies between $C_0(X)$ and $L^\infty(X)$. It is worth pointing out that Proposition~\ref{estimate for cmmt cor} is based on a crucial intermediate result established in a more general setup of Banach spaces, and we hope that there might be some other applications in the future.\footnote{After we finish this paper, \v{S}pakula and the third-named author informed us that the main theorem of this paper remains true if we only require Property A rather than straight finite decomposition complexity \cite{vspakula2018quasi}. Their arguments include an essential application of Proposition~\ref{estimate for cmmt cor}.}

The paper is organised as follows: we establish the settings of the paper by recalling some background  in Banach algebra theory and coarse geometry in Section 2, where various examples of $L^p$-Roe-like algebras are also provided. In Section 3, we provide a complete version of our main result Theorem A, and prove the relatively easier part, where the assumption of straight finite decomposition complexity is not required. In Section 4, we prove the technical tool, Proposition~\ref{estimate for cmmt cor}, and finish the remaining proof of the main theorem.

\textbf{Conventions:} Let $\X$ be a Banach space. We denote the closed unit ball of $\X$ by $\X_1$. For any $a,b \in \X$ and $\varepsilon >0$, we denote $\|a-b\| \leq \varepsilon$ by $a\approx_\varepsilon b$. We also denote the bounded linear operators on $\X$ by $\BX$, and the compact operators on $\X$ by $\KX$. Moreover, for a Banach algebra $A$ we define
$$A_\infty := \ell^\infty(\mathbb N, A)\big/ \big\{(a_n)_{n\in \mathbb N} \in \ell^\infty(\mathbb N, A): \lim_{n\rightarrow \infty}\|a_n\|=0\big\},$$
which is a Banach algebra with respect to the quotient norm.

\emph{Throughout the paper, we fix a \emph{proper} metric space $(X,d)$} (i.e., every bounded subset is pre-compact). Note that such a space is always locally compact and $\sigma$-compact. \emph{We also fix a Radon measure $\mu$ on $(X,d)$ with full support} (i.e., $\mu$ is a regular Borel measure on $X$ taking finite values on compact subsets, and for each $x \in X$, there exists a neighbourhood $U$ of $x$ such that $\mu(U)>0$).

\section{Preliminaries}\label{preliminaries}
In this section, we provide the background settings of this paper by collecting several basic notions from Banach algebra theory and coarse geometry. Throughout the section, let $E$ be a (complex) Banach space and $(X,d,\mu)$ be a proper metric space with a Radon measure $\mu$ on $X$ of full support. 

Denote $C_b(X)$ the space of bounded continuous functions on $X$, $C_0(X)$ the space of continuous functions on $X$ vanishing at infinity, and $C_c(X)$ the space of continuous functions on $X$ with compact supports.

\subsection{Banach space valued $L^p$-spaces}
In this subsection, we recall some basic notions and facts on Banach space valued $L^p$-spaces.
\begin{defn}
Let $p \in [1,\infty]$. For a \emph{Bochner measurable} function (i.e., it equals $\mu$-almost everywhere to a pointwise limit of a sequence of simple functions)\footnote{It follows from Pettis measurability theorem that Bochner measurability agrees with weak measurability when the Banach space $E$ is separable.} $\xi: (X,\mu) \to E$, its \emph{$p$-norm} is defined by
$$\|\xi\|_p:=\big( \int_{X} \|\xi(x)\|_E^p \d\mu(x) \big)^{\frac{1}{p}},$$
and its \emph{infinity-norm} is defined by
$$\|\xi\|_\infty:=\mathrm{ess~sup}\{\|\xi(x)\|_E: x\in X\}.$$
For $p \in [1,\infty]$, the space of $E$-valued $L^p$-functions on $(X,\mu)$ is defined as follows:
$$L^p(X,\mu;E):=\big\{\xi:X \to E ~\big|~ \xi \mbox{~is~Bochner~measurable~and~} \|\xi\|_p < \infty\big\}\big/ \sim,$$
where $\xi \sim \eta$ if and only if they are equal $\mu$-almost everywhere. Equipped with the $p$-norm, $L^p(X,\mu;E)$ becomes a Banach space, which is called the \emph{$L^p$-Bochner space}.
\end{defn}
We also need the following closed linear subspace of $L^\infty(X,\mu;E)$:
\begin{equation*}
L^0(X,\mu;E):=\big\{[\xi] \in L^\infty(X,\mu;E)~\big|~ \forall \varepsilon>0, \exists \mbox{~compact~}K \subseteq X, \mbox{~s.t.~} \|\xi|_{X\setminus K}\|_\infty < \varepsilon\big\},
\end{equation*}
equipped with the norm $\|\xi\|_0:=\|\xi\|_\infty$. Clearly, $L^0(X,\mu;E)$ contains $C_0(X)$ but is more flexible, as it also contains all characteristic functions of bounded subsets of the proper metric space $(X,d)$. On the other hand, $L^0(X,\mu;E)$ inherits some nice behaviours of $C_0(X)$, for example a representative can always be chosen for each element in $L^0(X,\mu;E)$ such that its norm goes to zero when the variable goes to infinity.

In order to simplify notations, we regard $\xi$ as an element in $L^p(X,\mu;E)$ and write $\LpE$ instead if there is no ambiguity. If $X$ is discrete and equipped with the counting measure, we simply write $\ell^p(X;E)$.

If $p \in (1,\infty)$, let $q$ be the \emph{conjugate exponent} to $p$ (i.e., $\frac{1}{p}+\frac{1}{q}=1$) and if $p=1$, we set $q=0$ instead of $q=\infty$. It is worth noticing that the duality $L^p(X;E)^* \cong L^q(X;E^*)$ does not hold in general (see e.g. \cite{MR1027088, MR0453964, MR0206190}), but we still have the following lemma.

\begin{lem}\label{norm isom}
When $p \in (1,\infty)$, set $q$ to be its conjugate exponent and when $p=1$, set $q=0$. Then there is an isometric embedding $\LqEst \to \LpE^*$ defined by
$$\eta(\xi):=\int_X \eta(x)(\xi(x)) \d\mu(x)$$
where $\eta \in \LqEst$ and $\xi \in \LpE$. On the other hand, there is another isometric embedding $\LpE \to \LqEst^*$ defined by
$$\xi(\zeta):=\int_X \zeta(x)(\xi(x)) \d\mu(x)$$
where $\xi \in \LpE$ and $\zeta \in \LqEst$.
\end{lem}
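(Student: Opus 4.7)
The plan is to handle both embeddings in parallel in two stages: a uniform upper bound via Hölder's inequality, and a matching lower bound obtained by constructing near-optimal testing functions via measurable selection. For the upper bound, I would first observe that for Bochner measurable $\eta : X \to E^*$ and $\xi : X \to E$ the scalar function $x \mapsto \eta(x)(\xi(x))$ is measurable (an a.e.\ limit of pairings of simple-function approximations) and pointwise bounded by $\|\eta(x)\|_{E^*}\|\xi(x)\|_E$. Hölder's inequality in $L^p(X,\mu) \times L^q(X,\mu)$, interpreted for $p = 1, q = 0$ as the standard $L^1 \times L^\infty$ pairing (valid since $L^0 \subseteq L^\infty$ isometrically), gives $|\eta(\xi)| \leq \|\eta\|_q\|\xi\|_p$ and symmetrically $|\xi(\zeta)| \leq \|\xi\|_p\|\zeta\|_q$, so both linear maps are contractions.

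For the reverse inequality when $p \in (1,\infty)$: given nonzero $\eta \in L^q(X;E^*)$ and $\varepsilon > 0$, Bochner measurability makes $\eta$ essentially separably valued, so I can choose once a countable family $\{w_k\} \subseteq E_1$ that norms $\overline{\mathrm{span}}\,\eta(X)$. Letting $n(x)$ be the smallest $k$ with $|\eta(x)(w_k)| \geq (1-\varepsilon)\|\eta(x)\|$ yields a measurable selector, and setting
\[
v(x) := \overline{\mathrm{sgn}\,\eta(x)(w_{n(x)})}\,w_{n(x)}, \qquad \xi(x) := \|\eta(x)\|^{q-1}\,v(x)
\]
produces a Bochner measurable $\xi$ for which the identity $p(q-1) = q$ gives $\|\xi\|_p = \|\eta\|_q^{q/p}$ together with $\eta(\xi) \geq (1-\varepsilon)\|\eta\|_q^q$, whence $|\eta(\xi)|/\|\xi\|_p \geq (1-\varepsilon)\|\eta\|_q$. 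The second embedding is handled symmetrically: norm $\overline{\mathrm{span}}\,\xi(X) \subseteq E$ by a countable family in $(E^*)_1$ supplied by Hahn--Banach, select $u(x)$ analogously, and set $\zeta(x) := \|\xi(x)\|^{p-1}\,u(x)$.

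The case $p = 1$, $q = 0$ requires only minor adjustments to the selection step. For the first embedding, $\sigma$-finiteness of $(X,\mu)$ lets me pick a Borel $B$ of finite positive measure on which $\|\eta(x)\| \geq \|\eta\|_\infty - \varepsilon$, and then $\xi := \mu(B)^{-1}\chi_B\,v \in L^1(X;E)_1$ with $v$ chosen as above gives $\eta(\xi) \geq (1-\varepsilon)(\|\eta\|_\infty - \varepsilon)$. For the second embedding, given $\xi \in L^1(X;E)$ I use properness of $X$ and integrability of $\|\xi\|_E$ to pick a compact $K$ with $\int_{X\setminus K}\|\xi\|\,\d\mu < \varepsilon$, select $u : K \to (E^*)_1$ measurably with $u(x)(\xi(x)) \geq (1-\varepsilon)\|\xi(x)\|$, and set $\zeta := \chi_K\,u$; then $\zeta$ has compact essential support, so it lies in $L^0(X;E^*)_1$ and satisfies $\xi(\zeta) \geq (1-\varepsilon)(\|\xi\|_1 - \varepsilon)$.

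The main obstacle throughout is the measurable-selection step: producing a Bochner measurable vector field that realises the pointwise norms of $\eta$ (or $\xi$) to within $\varepsilon$. One reduces via Bochner measurability to a separable target subspace, norms it by a countable family from $E_1$ (or $(E^*)_1$ via Hahn--Banach), and picks at each point the first countable witness within $\varepsilon$. The additional subtlety in the second $p = 1$ case is that $\zeta$ must land in $L^0$, not merely in $L^\infty$; this is precisely what the truncation to compact $K$ achieves, and it is exactly why $L^0$, rather than $C_0$, is the appropriate predual-like partner for $L^1$ in the Bochner setting here.
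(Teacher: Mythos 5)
Your argument is correct, but it takes a different route from the paper's. For the second embedding (the one the paper actually proves in detail), the paper reduces to the dense set of simple functions $\xi=\sum_{i=1}^n y_i\chi_{\Omega_i}$ with pairwise disjoint \emph{compact} supports (using inner regularity of $\mu$ and the already-established contractivity), and then builds an \emph{exact} norming element $\zeta=\sum_i \frac{\|y_i\|^{p-1}}{\|\xi\|_p^{p-1}}y_i^*\chi_{\Omega_i}$ from finitely many Hahn--Banach functionals, with compactness of the $\Omega_i$ automatically placing $\zeta$ in $\CE$ when $p=1$; for the first embedding it simply invokes the classical scalar arguments (plus $L^0\subseteq L^\infty\hookrightarrow L^1(X;E)^*$ for $p=1$). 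You instead work directly with a general $\eta$ or $\xi$, using essential separability of the range, a countable norming family (from $E_1$ or from $(E^*)_1$ via Hahn--Banach), and a first-hit measurable selector to get a $(1-\varepsilon)$-norming field, with a compact truncation securing membership in $L^0(X;E^*)$ when $p=1$. Your approach buys a self-contained treatment of \emph{both} embeddings, in particular supplying the measurable-selection step that the scalar argument quoted for the first embedding does not literally provide in the vector-valued setting; the paper's approach buys brevity, since density of compactly supported simple functions makes the norming exact and the $L^0$ issue trivial. Two cosmetic points: your $\|\xi\|_p=\|\eta\|_q^{q/p}$ should be $\leq$ (since $\|w_{n(x)}\|\leq 1$ need not be $1$), which only helps the estimate; and the selector is vacuously defined on $\{\|\eta(x)\|=0\}$, which is harmless.
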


\begin{proof}
For the first statement, when $p>1$ it follows from the same argument showing the classical result that $L^q(X;\mathbb{C})$ embeds isometrically into $L^p(X;\mathbb{C})^*$ (which are indeed isomorphic). And for $p=1$, we have the following maps
$$L^0(X;E^*) \subseteq L^\infty(X;E^*) \hookrightarrow L^1(X,E)^*$$
where the second isometric embedding follows from the same argument showing the classical result that $L^\infty(X;\mathbb{C})$ embeds isometrically into $L^1(X;\mathbb{C})^*$.

For the second statement, it suffices to show that for any $\xi \in \LpE$, we have
$$\|\xi\|_p = \sup\{|\xi(\zeta)|: \zeta \in \LqEst \mbox{~and~} \|\zeta\|_q \leq 1\}.$$
It is clear that the right hand side does not exceed the left. Conversely we may assume, by the inner regularity of $\mu$, that $\xi$ is non-zero and $\xi=\sum_{i=1}^{n} y_i \chi_{\Omega_i}$ for some $y_i\in E$ and mutually disjoint compact subsets $\Omega_i$ in $X$. Note that $\| \xi\|_p^p=\sum_{i=1}^n \|y_i\|_E^p\mu(\Omega_i)$. For each $y_i$, choose a $y_i^* \in (E^*)_1$ such that $y_i^*(y_i)=\|y_i\|_E$. Define
$$\zeta:=\sum_{i=1}^{n} \frac{\|y_i\|_E^{p-1}}{\|\xi\|_p^{p-1}}y_i^* \chi_{\Omega_i}.$$
Note that when $p=1$, $\zeta$ can be written simply as $\sum_{i=1}^{n} y_i^* \chi_{\Omega_i}$. It is straightforward to check that $\zeta \in \LqEst$ with $\|\zeta\|_q=1$ and $\xi(\zeta)=\|\xi\|_p$ (note that when $p=1$, we set $q=0$). Hence, we finish the proof.
\end{proof}

Finally we recall $L^p$-tensor products (more details can be found in \cite[Chapter 7]{DF}, \cite[Theorem 2.16]{Phil12} and \cite{michor2006functors}), which will be used in Section~\ref{$L^p$-Roe-like algebras} without further reference.

For $p\in[1,\infty)$, there is a tensor product of $L^p$-spaces with $\sigma$-finite measures such that there is a canonical isometric isomorphism $L^p(X,\mu)\otimes L^p(Y,\nu)\cong L^p(X\times Y,\mu\times\nu)$, which identifies the element $\xi\otimes\eta$ with the function $(x,y)\mapsto\xi(x)\eta(y)$ on $X\times Y$ for every $\xi\in L^p(X,\mu)$ and $\eta\in L^p(Y,\nu)$. Moreover, the following properties hold:
\begin{itemize}
\item Under the identification above, the linear spans of all $\xi\otimes\eta$ are dense in $L^p(X\times Y,\mu\times\nu)$.
\item $||\xi\otimes\eta||_p=||\xi||_p||\eta||_p$ for all $\xi\in L^p(X,\mu)$ and $\eta\in L^p(Y,\nu)$.
\item The tensor product is commutative and associative.
\item If $a\in \mathfrak{B}(L^p(X_1,\mu_1),L^p(X_2,\mu_2))$ and $b\in \mathfrak{B}(L^p(Y_1,\nu_1),L^p(Y_2,\nu_2))$, then there exists a unique element \[c\in \mathfrak{B}(L^p(X_1\times Y_1,\mu_1\times\nu_1),L^p(X_2\times Y_2,\mu_2\times\nu_2))\] such that under the identification above, $c(\xi\otimes\eta)=a(\xi)\otimes b(\eta)$ for all $\xi\in L^p(X_1,\mu_1)$ and $\eta\in L^p(Y_1,\nu_1)$. We will denote this operator by $a \otimes b$. Moreover, $||a\otimes b||=||a|| \cdot ||b||$.
\item The above tensor product of operators is associative, bilinear, and satisfies $(a_1\otimes b_1)(a_2\otimes b_2)=a_1a_2\otimes b_1b_2$.
\end{itemize}
If $A\subseteq \mathfrak{B}(L^p(X,\mu))$ and $B\subseteq \mathfrak{B}(L^p(Y,\nu))$ are closed subalgebras, we define $A\otimes B\subseteq \mathfrak{B}(L^p(X\times Y,\mu\times\nu))$ to be the closed linear span of all $a\otimes b$ with $a\in A$ and $b\in B$.

\subsection{Block cutdown maps}
Now we introduce block cutdown maps, providing an approach to cut an operator into the form of block diagonals.

First, let us recall some more notions. For $p \in \{0\} \cup [1,\infty]$, the \emph{multiplication representation} $\rho:C_b(X) \rightarrow \BLpE$ is defined by pointwise multiplications: $(\rho (f)\xi)(x)=f(x)\xi(x)$, where $f \in C_b(X)$, $\xi \in \LpE$ and $x \in X$. Without ambiguity, we write $fT$ and $Tf$ instead of $\rho(f)T$ and $T\rho(f)$, for $f\in C_b(X)$ and $T \in \BLpE$, respectively. It is worth noticing that $\mu$ has full support if and only if $\rho$ is injective. We also recall that a net $\{T_\alpha\}$ converges in \emph{strong operator topology (SOT)} to $T$ in $\BLpE$ if and only if $\|T_\alpha(\xi)-T(\xi)\|_p  \rightarrow 0$ for any $\xi \in \LpE$.

\begin{defn}\label{block cut down}
Given an equicontinuous family $(e_j)_{j \in J}$ of positive contractions in $C_b(X)$ with pairwise disjoint supports, define the \emph{block cutdown} map $\theta_{(e_j)_{j \in J}}: \BLpE \rightarrow \BLpE$ by
\begin{equation}\label{defn for theta}
\theta_{(e_j)_{j \in J}}(a):=\sum_{j \in J}e_jae_j,
\end{equation}
where the sum converges in (SOT) by Lemma~\ref{SOT convergence lem} below. We say that a closed subalgebra $B \subseteq \BLpE$ is \emph{closed under block cutdowns}, if $\theta_{(e_j)_{j \in J}}(B)\subseteq B$ for every equicontinuous family $(e_j)_{j \in J}$ of positive contractions in $C_b(X)$ with pairwise disjoint supports.
\end{defn}

\begin{lem}\label{SOT convergence lem}
Let $(e_j)_{j \in J}$ and $(f_j)_{j \in J}$ be two equicontinuous families of positive contractions in $C_b(X)$ with pairwise disjoint supports, and $a \in \BLpE$. Then the sum $\sum_{j \in J}f_jae_j$ converges in (SOT) to an operator in $\BLpE$. Furthermore, we have:
$$\big\|\sum_{j \in J}f_jae_j\big\|=\sup_{j \in J}\|f_jae_j\|.$$
\end{lem}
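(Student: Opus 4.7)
The plan is to view the sum as a net of finite partial sums $T_F := \sum_{j \in F} f_j a e_j$ indexed by the directed set of finite subsets $F \subseteq J$, show that $(T_F\xi)_F$ is Cauchy in $\LpE$ for every $\xi$, and read the norm identity off the limit. Two pointwise structural facts will drive everything. First, since the $f_j$'s have pairwise disjoint supports, at each $x \in X$ at most one $f_j(x)$ is nonzero. Second, since the $e_j$'s have pairwise disjoint supports, for any $\xi \in \LpE$ the cut-offs $\xi_j := \chi_{\supp(e_j)}\xi$ are pointwise disjointly supported, satisfy $e_j \xi = e_j \xi_j$, and obey $\sum_{j \in J}\|\xi_j\|_p^p \leq \|\xi\|_p^p$.

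The first step is to combine the two observations. Disjointness of the $f_j$ together with the definition of the Bochner $p$-norm gives
\[
\|T_F \xi\|_p^p \;=\; \int_X \Big\|\sum_{j \in F}f_j(x)\cdot(ae_j\xi)(x)\Big\|_E^p\d\mu(x) \;=\; \sum_{j \in F}\|f_j a e_j \xi\|_p^p.
\]
Writing $f_j a e_j \xi = f_j a e_j \xi_j$ and applying the trivial estimate $\|f_j a e_j \xi_j\|_p \leq \|f_j a e_j\|\cdot\|\xi_j\|_p$ together with the second observation yields the uniform bound
\[
\|T_F \xi\|_p^p \;\leq\; \big(\sup_{j \in J}\|f_j a e_j\|\big)^{p}\sum_{j \in F}\|\xi_j\|_p^p \;\leq\; \big(\sup_{j \in J}\|f_j a e_j\|\big)^{p}\|\xi\|_p^p.
\]
Applied to the tail difference $T_F\xi - T_{F'}\xi = \sum_{j \in F\setminus F'} f_j a e_j \xi$ (for $F' \subseteq F$), this same identity combined with summability of $\sum_j \|\xi_j\|_p^p$ shows that $(T_F\xi)_F$ is Cauchy. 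Its limit defines a linear operator $T \in \BLpE$ with $\|T\| \leq \sup_{j \in J}\|f_j a e_j\|$.

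For the matching lower bound I would exploit the disjointness of the $e_j$-supports from the domain side: for each fixed $j_0 \in J$ and any $\xi \in \LpE$ with $\supp(\xi) \subseteq \supp(e_{j_0})$, all terms with $j \neq j_0$ annihilate $\xi$, so $T\xi = f_{j_0}ae_{j_0}\xi$; since $f_{j_0}ae_{j_0}$ itself only depends on the restriction of its input to $\supp(e_{j_0})$, its operator norm is already attained on such $\xi$, giving $\|T\| \geq \|f_{j_0}ae_{j_0}\|$ for each $j_0$. I do not foresee a serious obstacle: the two-sided disjointness hypothesis \emph{diagonalises} the operator enough that $p$-norm additivity does all the work, and the remaining ingredients are routine Cauchy-net bookkeeping. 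The positivity and equicontinuity of the $e_j$ and $f_j$ play no role in this particular statement, entering only in later applications of the block cutdown.
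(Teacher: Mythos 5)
Your proof is correct, but the key estimate is obtained by a genuinely different (and more elementary) route than the paper's. The paper bounds $\big\|\big(\sum_{j\in F}f_jae_j\big)\xi\big\|_p$ by pairing against $\eta\in\LqEst$, splitting $\eta$ over the supports of the $f_j$, applying H\"older's inequality, and then invoking the isometric embedding $\LpE\hookrightarrow\LqEst^*$ of Lemma~\ref{norm isom} to recover the norm from the pairing; this forces a case distinction, and the case $p=1$ (where $q=0$) is declared ``more direct'' and left to the reader. You instead work entirely on the primal side: since the $f_j$ have pairwise disjoint supports, at each point at most one summand is nonzero, so you get the exact identity $\|T_F\xi\|_p^p=\sum_{j\in F}\|f_jae_j\xi\|_p^p$, and the disjointness of the $\supp(e_j)$ gives $\sum_j\|\xi_j\|_p^p\le\|\xi\|_p^p$, from which the uniform bound, the Cauchy-net argument, and $\|T\|\le\sup_j\|f_jae_j\|$ follow without any duality or H\"older step. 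This buys uniformity in $p$: your argument covers $p=1$ and $p>1$ in one stroke, needs no appeal to Lemma~\ref{norm isom}, and yields an equality rather than an inequality at the crucial step. Your lower-bound argument (testing on $\xi$ supported in $\supp(e_{j_0})$, noting $f_{j_0}ae_{j_0}$ depends only on that restriction) makes explicit what the paper dismisses as clear; the only cosmetic quibble is the word ``attained,'' which should be read as ``the supremum over such $\xi$ equals the operator norm.'' Your closing remark that positivity and equicontinuity are irrelevant to this lemma is also accurate.
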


\begin{proof}
First of all, we prove in the case of $p\in (1,\infty)$ and let $q$ be the conjugate exponent to $p$. Let $Y_j:=\supp(e_j)$ and $Z_j:=\supp(f_j)$. For any $\xi \in \LpE$, any finite subset $F \subseteq J$ and any $\eta \in \LqEst$ with $\|\eta\|_q \leq 1$, we have that
\begin{eqnarray*}
  \big|\eta\big(\sum_{j \in F}f_jae_j\xi\big)\big| &=& \big|\sum_{j \in F} (\eta\chi_{Z_j})(f_jae_j\chi_{Y_j}\xi)\big| \\
   &\leq &\sum_{j \in F} \|\eta|_{Z_j}\|_q\cdot\|f_jae_j(\xi|_{Y_j})\|_p\\
   &\leq &\big(\sum_{j \in F} \|\eta|_{Z_j}\|_q^q\big)^{\frac{1}{q}}\cdot \big(\sum_{j \in F} \|f_jae_j(\xi|_{Y_j})\|_p^p\big)^{\frac{1}{p}}\\
   & \leq & \sup_{j \in J}\|f_jae_j\| \cdot \|\xi|_{\sqcup_{j\in F}Y_j}\|_p.
\end{eqnarray*}
Hence, it follows from Lemma \ref{norm isom} that
$$\big\| \big(\sum_{j \in F}f_jae_j\big)\xi \big\|_p \leq \sup_{j \in J}\|f_jae_j\| \cdot \|\xi|_{\sqcup_{j\in F}Y_j}\|_p.$$
Since $\|\xi|_{\sqcup_{j\in J}Y_j} \|_p\leq \|\xi\|_p<\infty$, we know $\big\{\xi|_{\sqcup_{j\in F}Y_j}\big\}_F$ is a Cauchy net. Hence, $\sum_{j \in J}f_jae_j$ converges in (SOT) and $\|\sum_{j \in J}f_jae_j\| \leq \sup_{j \in J}\|f_jae_j\|$. On the other hand, it is clear that $\|\sum_{j \in J}f_jae_j\|$ $\geq \sup_{j \in J}\|f_jae_j\|$. Hence we finish the proof for $p>1$. Since the proof for the case of $p=1$ is more direct, we leave the details to the reader.
\end{proof}

\begin{rem}
Note that the multiplication by $C_b(X)$ commutes with the block cutdowns, i.e., for any $a \in \BLpE$ and $f \in C_b(X)$, we have
$$f\theta_{(e_j)_{j \in J}}(a)=\theta_{(e_j)_{j \in J}}(fa) \andx \theta_{(e_j)_{j \in J}}(a)f=\theta_{(e_j)_{j \in J}}(af).$$
\end{rem}


\begin{defn}\label{block diag}
Suppose $\mathcal{X}$ is a metric family of subsets in $X$ (recall that a \emph{metric family} is a set of metric spaces), each of the subset is equipped with the induced metric
and $a \in \BLpE$. We say that $a$ is \emph{block diagonal with respect to $\mathcal{X}$}, if there exist an equicontinuous family $(e_j)_{j\in J}$ of positive contractions in $C_b(X)$ with pairwise disjoint supports and $\{Y_j\}_{j\in J} \subseteq \mathcal{X}$, such that
$$a=\theta_{(e_j)_{j \in J}}(a),$$
and $\supp(e_j) \subseteq Y_j$. In this case, we shall denote $a_{Y_j}:=e_jae_j$, which is called the \emph{$Y_j$-block of $a$}.
\end{defn}

\subsection{$L^p$-Roe-like algebras}\label{$L^p$-Roe-like algebras}
Now we introduce $L^p$-Roe-like algebras, which are our main objects in this paper.
\begin{defn}
Let $R \geq 0$ and $a \in \BLpE$. We say that
\begin{itemize}
  \item $a$ has \emph{propagation at most $R$}, if for any $f,f' \in C_b(X)$ with $d(\supp(f),\supp(f')) > R$, then $faf'=0$.
  \item $a$ has \emph{$\varepsilon$-propagation at most $R$} for some $\varepsilon >0$,  if for any $f,f' \in C_b(X)_1$ with $d(\supp(f),\supp(f')) > R$, then $\|faf'\|< \varepsilon$.
  \item $a$ is \emph{quasi-local}, if it has finite $\varepsilon$-propagation for every $\varepsilon>0$.
\end{itemize}
\end{defn}

\begin{defn}\label{$L^p$-Roe-like algebras def}
Let $(X,d)$ be a proper metric space equipped with a Radon measure $\mu$ whose support is $X$, and $p \in [1,+\infty)$. Suppose $E$ is a Banach space and $B \subseteq \BLpE$ is a Banach subalgebra such that $C_b(X)BC_b(X)=B$ and is closed under block cutdowns. Define:
\begin{enumerate}[(i)]
  \item $\Roe$ to be the norm-closure of all the operators in $B$ with finite propagations. $\Roe$ is called the \emph{$L^p$-Roe-like algebra of $(X,d,\mu)$};
  \item $\K$ to be the norm-closure of $C_0(X)BC_0(X)$ in $\BLpE$.
\end{enumerate}
\end{defn}

\begin{rem}\label{weak def rem}
The definition of $L^2$-Roe-like algebras come from \cite[Definition~2.3]{spakula2017relative}, in which the following extra condition is also imposed:
\begin{equation}\label{commut condition 2.2}
  [C_0(X),B] \subseteq \K.
\end{equation}
This condition is used in the proof of their main theorem, \cite[Theorem 2.8, ``(i) $\Rightarrow$ (iii)"]{spakula2017relative}. However, it turns out to be redundant if we apply our Lemma~\ref{2.2 replacement} below. On the other hand, this condition is fulfilled by most of the well-known $L^p$-Roe-like algebras for $p\in(1,\infty)$ (as we will see in the following examples), but not for $p=1$ (see the explanation in Example~\ref{uniform Roe alg}). This is exactly our starting point to explore whether condition \eqref{commut condition 2.2} is necessary, and it turns out that we may omit it in Definition \ref{$L^p$-Roe-like algebras def} without affecting the main theorem. In this way, our main result (Theorem~\ref{char for Roe alg Thm}) is a slight generalisation of \cite[Theorem~2.8]{spakula2017relative}.
\end{rem}

We notice that in the case of $p=2$, it has been pointed out in \cite[Remark~2.4]{spakula2017relative} that $\K$ is an ideal in $\Roe$ under the additional condition (\ref{commut condition 2.2}). Now we show that it still holds in our settings.

\begin{lem}\label{ideal}
For any $p \in [1,+\infty)$, $\K$ is a closed two-sided ideal in $\Roe$.
\end{lem}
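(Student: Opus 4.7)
The plan is to verify two things: first that $\K \subseteq \Roe$, and second that $\Roe \cdot \K \subseteq \K$ and $\K \cdot \Roe \subseteq \K$. Since both $\Roe$ and $\K$ are closed by definition and multiplication on $\BLpE$ is continuous, in each case it suffices to work with elements of dense subsets: operators $a \in B$ of finite propagation for $\Roe$, and elements of the form $k = f_1 b f_2$ with $f_1, f_2 \in C_0(X)$ and $b \in B$ for $\K$. Moreover, since $C_c(X)$ is norm-dense in $C_0(X)$ and $\|\rho(\cdot)\| \leq \|\cdot\|_\infty$, we may further assume that $f_1, f_2 \in C_c(X)$.

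For the inclusion $\K \subseteq \Roe$, take $k = f_1 b f_2$ with $f_i \in C_c(X)$ and set $R := \mathrm{diam}(\supp(f_1) \cup \supp(f_2))$, which is finite since both supports are compact. If $g, g' \in C_b(X)$ satisfy $d(\supp(g), \supp(g')) > R$, then one must have $\supp(g) \cap \supp(f_1) = \emptyset$ or $\supp(g') \cap \supp(f_2) = \emptyset$, since otherwise we would find points of $\supp(g)$ and $\supp(g')$ both meeting $\supp(f_1) \cup \supp(f_2)$ at distance at most $R$. Hence $gkg' = 0$, so $k$ has propagation at most $R$. Since in addition $k \in C_b(X)BC_b(X) = B$, we obtain $k \in \Roe$. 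Passing to closures yields $\K \subseteq \Roe$.

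For the ideal property, let $a \in B$ have propagation at most $R$ and $k = f_1 b f_2$ with $f_1, f_2 \in C_c(X)$ and $b \in B$. Since $(X,d)$ is proper, the closed $(R+1)$-neighbourhood $N$ of $\supp(f_1)$ is compact, so by Urysohn's lemma in a locally compact Hausdorff space there exists $g \in C_c(X)$ with $g \equiv 1$ on $N$ and $0 \leq g \leq 1$. Then $d(\supp(1-g), \supp(f_1)) \geq R+1 > R$, and the propagation condition gives $(1-g)af_1 = 0$, i.e.\ $af_1 = g\, af_1$. Consequently
\[
ak = g(af_1 b) f_2 \in C_0(X) \cdot B \cdot C_0(X) \subseteq \K,
\]
where $af_1b \in B$ because $B$ is a subalgebra satisfying $C_b(X)BC_b(X) = B$. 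The argument for $ka \in \K$ is symmetric: choose $g' \in C_c(X)$ identically $1$ on the $(R+1)$-neighbourhood of $\supp(f_2)$, so that $f_2 a = f_2 a g'$ by propagation, and conclude $ka = f_1(bf_2a)g' \in \K$.

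The only non-trivial ingredient is the existence of the compactly supported cutoff functions $g, g'$, which relies on the fact that bounded neighbourhoods in $X$ have compact closure; this is precisely the properness assumption on $(X,d)$ standing throughout the paper, so no further obstacle is expected.
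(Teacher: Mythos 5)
Your proof is correct and follows essentially the same route as the paper: approximate by elements of $C_c(X)BC_c(X)$ and by finite-propagation elements of $B$, then use a compactly supported cutoff function equal to $1$ on a neighbourhood of the relevant support, together with the propagation bound, to absorb the product back into $C_c(X)BC_c(X)\subseteq\K$. Your explicit check that $\K\subseteq\Roe$ (and the use of the $(R+1)$-neighbourhood, which gives the strict separation $>R$ cleanly) is a small addition that the paper leaves implicit, but the core argument is the same.
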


\begin{proof}
It suffices to show that for any $b=f_1b_1g_1 \in C_c(X)BC_c(X)$ and $a\in B$ with finite propagation at most $R$, $ba \in \K$. Take a function $g_2 \in C_c(X)$ such that $g_2$ is $1$ on the compact subset $\overline{\N_R(\supp(g_1))}$. It follows that $g_1a(1-g_2)=0$, which implies that $g_1a=g_1ag_2$. Hence, we have
$$ba=f_1b_1g_1a=f_1(b_1g_1a)g_2.$$
Recall that $C_b(X)BC_b(X)=B$, so we have $b_1g_1\in B$ and $a \in B$, which implies that $ba \in C_c(X)BC_c(X)$. Similarly, $ab \in C_c(X)BC_c(X)$ as well. So we finish the proof.
\end{proof}

Before we illustrate several examples of $L^p$-Roe-like algebras, let us recall the following notion related to matrix algebras.

\begin{defn}
Let $(X,d)$ be a discrete proper metric space and $p \in [1,+\infty)$. Denote
$$\overline{M}^p_X:=\overline{C_c(X)\mathfrak{B}(\ell^p(X))C_c(X)}^{\mathfrak{B}(\ell^p(X))},$$
i.e., for any fixed point $x_0 \in X$
$$\overline{M}^p_X=\overline{\bigcup_{n\in \mathbb{N}}M^p_{B_n(x_0)}}$$
where $M^p_{B_n(x_0)}=\mathfrak{B}(\ell^p(B_n(x_0))) \subseteq \mathfrak{B}(\ell^p(X))$, which is the matrix algebra over the closed ball of radius $n$ and centered in $x_0$. In other words, operators in $\overline{M}^p_X$ are exactly those that can be approximated by finite matrices.
\end{defn}

Phillips studied the relation between $\overline{M}^p_X$ and compact operators $\mathfrak{K}(\ell^p(X))$ in \cite{phillips2013crossed}. He showed that when $p>1$, $\overline{M}^p_X=\mathfrak{K}(\ell^p(X))$ (\cite[Lemma 1.7, Corollary 1.9]{phillips2013crossed}); and when $p=1$, $\overline{M}^1_X \subsetneq \mathfrak{K}(\ell^1(X))$ in general as illustrated in \cite[Example 1.10]{phillips2013crossed} (see also Example~\ref{uniform Roe alg}).

Now we are ready to provide various of examples of $L^p$-Roe-like algebras, which include $\ell^p$-uniform Roe algebras, band-dominated operator algebras, $L^p$-Roe algebras, $\ell^p$-uniform algebras and stable $\ell^p$-uniform Roe algebras.

\begin{ex}[\textbf{$\ell^p$-Uniform Roe Algebra}]\label{uniform Roe alg}
Let $(X,d)$ be a discrete proper metric space and $p \in [1,+\infty)$. Take $E=\mathbb{C}$ to be the complex number and $B=\mathfrak{B}(\ell^p(X))$, which is clearly closed under block cutdowns, and satisfies $C_b(X)BC_b(X)=B$. In this case, $\Roe$ is called \emph{the $\ell^p$-uniform Roe algebra} of $X$, which is defined in \cite{chung2018rigidity} and denoted by $B^p_u(X)$, and $\K$ is $\overline{M}^p_X$ introduced above. It may be worth noting that $\overline{M}^1_X$ is structurally different from $\overline{M}^p_X$ for $p>1$.

$\bullet~ p>1$:
As pointed out above, $\overline{M}^p_X=\mathfrak{K}(\ell^p(X))$. And condition (\ref{commut condition 2.2}) follows from the fact that $C_0(X)B \subseteq \K$ and $BC_0(X) \subseteq \K$.

$\bullet~ p=1$:
The algebra $\K$ is in general \emph{properly} contained in $\mathfrak K (\ell^1(X))$ (see Example~1.10 in \cite{phillips2013crossed}). For example, taking $X$ to be the natural number $\mathbb{N}$, consider the operator $T: \ell^1(\mathbb{N}) \rightarrow \ell^1(\mathbb{N})$ defined by
$$T(\xi):=\big(\sum_{n\in \mathbb{N}} \xi(n)\big) \delta_0,$$
where $\xi \in \ell^1(\mathbb{N})$ and $\delta_0 \in \ell^1(\mathbb{N})$ is the function taking value $1$ at the origin point $0$, and $0$ elsewhere. Since $T$ has rank $1$, it belongs to $\mathfrak K (\ell^1(\mathbb{N}))$. However, it is not hard to see that $T\notin \mathcal{K}(\mathbb{N}, \mathfrak{B}(\ell^1(\mathbb{N})))=\overline{M}^1_{\mathbb{N}}$. Furthermore, the operator $T$ also illuminates that condition (\ref{commut condition 2.2}) does \emph{not} hold in general, since $[\delta_0,T] \notin \mathcal{K}(\mathbb{N}, \mathfrak{B}(\ell^1(\mathbb{N})))$.
\end{ex}

\begin{ex}[\textbf{Band-Dominated Operator Algebra}]
Let $(X,d)$ be a uniformly discrete metric space of \emph{bounded geometry} (in the sense that for a given $R>0$, all closed balls $B(x,R)$ have a uniform bound on cardinalities for all $x\in X$), $p \in (1,+\infty)$ and $E$ be a Banach space. Take $B=\mathfrak{B}(\ell^p(X;E))$, which is clearly closed under block cutdowns and satisfies $C_b(X)BC_b(X)=B$. Elements in $B$ can be represented in the matrix form
$$b=(b_{x,y})_{x,y\in X} \in \mathfrak{B}(\ell^p(X;E)), \quad \mbox{where} \quad b_{x,y} \in \mathfrak{B}(E).$$
In this case, $\Roe=\mathcal{A}^p_E(X)$, which is the algebra of band-dominated operators (see \cite[Definition~2.6]{vspakula2017metric}) and it is clear that $\K=\mathcal{K}^p_E(X)$, which is the set of all $\mathcal{P}$-compact operators on $\ell^p(X;E)$, defined in \cite[Definition~2.8]{vspakula2017metric}.
\end{ex}

\begin{ex}[\textbf{$L^p$-Roe Algebra}]\label{$L^p$-Roe Algebra}
Let $(X,d)$ be a proper metric space equipped with a Radon measure $\mu$ with support $X$, and $p \in [1,+\infty)$. We say that an operator $b$ in $\mathfrak{B}(L^p(X; \ell^p(\mathbb{N}))) \cong \mathfrak{B}(L^p(X \times \mathbb{N}))$ is \emph{locally compact} if for any $f \in C_0(X)$, $fb$ and $bf$ belong to $\mathfrak{K}(L^p(X \times \mathbb{N}))$.

Now take $E=\ell^p(\mathbb{N})$ and $B$ to be the set of all locally compact operators in $\mathfrak{B}(L^p(X; \ell^p(\mathbb{N})))$, which is clearly closed under block cutdowns and satisfies $C_b(X)BC_b(X)=B$. The corresponding $L^p$-Roe-like algebra $\Roe$ is called \emph{the $L^p$-Roe algebra of $X$}, denoted by $B^p(X)$. It is, by definition, the norm closure of all locally compact and finite propagation operators in $\mathfrak{B}(L^p(X; \ell^p(\mathbb{N})))$. Analogous to the arguments in Example \ref{uniform Roe alg}, one can check that when $p>1$, $\K=\mathfrak{K}(L^p(X \times \mathbb{N}))$ and it does not hold in general when $p=1$. 

When $X$ is discrete, the $L^p$-Roe algebra $B^p(X)$ coincides with the $\ell^p$-Roe algebra defined in \cite{chung2018rigidity} and in the case of $p=2$, the $L^2$-Roe algebra is the classical Roe algebra in the literature.
\end{ex}

\begin{rem}\label{weak version of L1 roe alg}
As explained in \cite{chung2018rigidity}, there is another version of locally compactness: we say that an operator $b$ in $\mathfrak{B}(L^p(X; \ell^p(\mathbb{N})))$ is \emph{locally compact} if for any $f \in C_0(X)$, $fb$ and $bf$ belong to $\mathfrak{K}(L^p(X)) \otimes \overline{M}^p_{\mathbb{N}} \subseteq \mathfrak{B}(L^p(X \times \mathbb{N}))$. Note that the subalgebra $\mathfrak{K}(L^p(X)) \otimes \overline{M}^p_{\mathbb{N}}$ is isomorphic to the norm closure of $\bigcup_{n\in \mathbb{N}} M^p_n(\mathfrak{K}(L^p(X)))$. Therefore, we can alternatively define another version of the $L^p$-Roe algebra of $X$ to be the norm closure of all locally compact (in this new sense) and finite propagation operators in $\mathfrak{B}(L^p(X; \ell^p(\mathbb{N})))$. When $p>1$, it coincides with $B^p(X)$ defined in Example~\ref{$L^p$-Roe Algebra} as $\mathfrak{K}(L^p(X)) \otimes \overline{M}^p_{\mathbb{N}} \cong \mathfrak{K}(L^p(X \times \mathbb{N}))$. However, it is strictly contained in $B^1(X)$ when $p=1$.
\end{rem}

\begin{ex}[\textbf{$\ell^p$-Uniform Algebra}]
Let $(X,d)$ be a discrete metric space with bounded geometry and $p \in [1,+\infty)$. Set $E=\ell^p(\mathbb{N})$, and $B$ to be the closure of the set of all $b=(b_{x,y})_{x,y \in X} \in \mathfrak{B}(\ell^p(X; \ell^p(\mathbb{N})))$ for which the rank of $b_{x,y} \in \mathfrak{B}(\ell^p(\mathbb{N}))$ is uniformly bounded. Clearly, $B$ is closed under block cutdowns, and satisfies $C_b(X)BC_b(X)=B$. In this case, $\mathrm{Roe}(X,B)=UB^p(X)$, \emph{the $\ell^p$-uniform algebra of $X$}, introduced in \cite{chung2018rigidity}. When $p>1$, we have that $\mathcal{K}(X,B)=\mathfrak{K}(l^p(X \times \mathbb{N}))$. But it does not hold in general when $p=1$.
\end{ex}

\begin{ex}[\textbf{Stable $\ell^p$-Uniform Roe Algebra}]\label{Stable lp Uniform Roe Algebra}
Let $(X,d)$ be a discrete metric space with bounded geometry and $p \in [1,+\infty)$. Set $E=\ell^p(\mathbb{N})$, and $B$ to be the closure of the set of all $b=(b_{x,y})_{x,y \in X} \in \mathfrak{B}(\ell^p(X; \ell^p(\mathbb{N})))$ for which there exists a finite-dimensional subspace $E_b \subseteq \ell^p(\mathbb{N})$ such that $b_{x,y} \in \mathfrak{B}(E_b) \subseteq \mathfrak{B}(\ell^p(\mathbb{N}))$. Clearly, $B$ is closed under block cutdowns and satisfies $C_b(X)BC_b(X)=B$. In this case, $\mathrm{Roe}(X,B)=B^p_s(X)$, \emph{the stable $\ell^p$-uniform Roe algebra of $X$}, introduced in \cite{chung2018rigidity}. Moreover, $B^p_s(X)\cong B^p_u(X) \otimes \mathfrak{K}(\ell^p(\mathbb{N}))$, which explains the terminology. Analogous to the arguments in Example \ref{uniform Roe alg}, one can check that when $p>1$, $\mathcal{K}(X,B)=\mathfrak{K}(l^p(X \times \mathbb{N}))$ and it does not hold in general when $p=1$.
\end{ex}

\begin{rem}
As explained in \cite{chung2018rigidity}, there is another version of the stable $\ell^p$-uniform Roe algebra of $X$, defined to be the norm closure of finite propagation operators $b=(b_{x,y})_{x,y \in X} \in \mathfrak{B}(\ell^p(X; \ell^p(\mathbb{N})))$ for which there exists some $k\in \mathbb{N}$ such that $b_{x,y} \in M_k(\mathbb{C}) \subseteq \mathfrak{B}(\ell^p(\mathbb{N}))$ (here $M_k(\mathbb{C})$ is embedded as a subalgebra of $\mathfrak{B}(\ell^p(\mathbb{N}))$ in a fixed way, independent of the points in $X$). It is clear that this algebra is isomorphic to $B^p_u(X) \otimes \overline{M}^p_\mathbb{N}$ for all $p\in [1,\infty)$. As before for $p>1$, it coincides with $B^p_s(X)$ defined in Example~\ref{Stable lp Uniform Roe Algebra}.
\end{rem}

\begin{rem}
In general, we have that for discrete space $X$,
$$B^p_u(X) \subseteq B^p_s(X) \subseteq UB^p(X) \subseteq B^p(X).$$
It is worth noticing that $UB^1(X)$ is not contained in the weak version of the $L^1$-Roe algebra defined in Remark \ref{weak version of L1 roe alg}. Indeed, Example~\ref{uniform Roe alg} provides a rank one operator $T \in \mathfrak{B}(\ell^1(\mathbb{N}))$ which does not sit in $\overline{M}^1_\mathbb{N}$. Define the diagonal operator $b \in \mathfrak{B}(\ell^1(X; \ell^1(\mathbb{N})))$ by $b_{x,x}:=T$ for $x\in X$, and $b_{x,y}=0$ for $x \neq y$. Clearly, $b$ is such an example as desired.
\end{rem}

\subsection{Straight finite decomposition complexity} In this subsection, we explain the notion of straight finite decomposition complexity, which will be used in the sequel.

Straight finite decomposition complexity (sFDC) was introduced in \cite{dranishnikov2014asymptotic} as a weak version of the original notion of finite decomposition complexity (FDC), which was introduced and studied by Guentner, Tessera and Yu in their study of topological rigidity in \cite{guentner2012notion}. In general, finite asymptotic dimension implies finite decomposition complexity \cite[Theorem~4.1]{MR3054574}, which consequently implies straight finite decomposition complexity \cite[Proposition~2.3]{dranishnikov2014asymptotic}. Moreover, it was also shown in \cite[Theorem~3.4]{dranishnikov2014asymptotic} that straight finite decomposition complexity does imply Yu's Property A. However, it is still unknown whether (FDC), (sFDC) and Yu's Property A are all equivalent or not.

\begin{defn}
Let $(X,d)$ be a proper metric space and $Z, Z' \subseteq X$. Let $\Xs,\Ys$ be metric families of subsets in $X$, and $R\geq 0$.
\begin{itemize}
  \item $\mathcal{X}$ is \emph{uniformly bounded}, if $\sup_{X\in \Xs}\mathrm{diam}(X)<\infty.$
  \item Denote the \emph{$R$-neighbourhood of $Z$} by $\N_R(Z):=\{z\in X: d(z,Z)\leq R\}$. Set
      $$\N_R(\Xs):=\{\N_R(X):X\in\Xs \}.$$
  \item A metric family $(Y_j)_{j\in J}$ of subsets of $X$ is \emph{$R$-disjoint}, if $d(Y_j,Y_j')>R$ for all $j\neq j'$. Write $$\bigsqcup_{R-disjoint}Y_j$$
      for their union to indicate that the family is $R$-disjoint.
  \item $Z$ can \emph{$R$-decompose over $\Ys$}, if $Z$ can be decomposed into $Z=X_0\cup X_1$ and
  $$X_i=\bigsqcup_{R-\text{disjoint}}X_{ij}, \quad i=0,1,$$
  such that $X_{ij}\in\Ys$ for all $i,j.$
  \item $\Xs$ can \emph{$R$-decompose over $\Ys$}, denoted by $\Xs\xrightarrow{R}\Ys$, if every $Y\in\Xs$ can $R$-decompose over $\Ys$.
  \item $X$ has \emph{straight finite decomposition complexity}, if for any sequence $0\leq R_1<R_2< \cdots,$ there exists $m\in\mathbb{N}$ and metric families $\{X\}=\Xs_0,\Xs_1,\ldots,\Xs_m$, such that $\Xs_{i-1} \xrightarrow{R_i} \Xs_i$ for $i=1,\ldots, m$, and the family $\Xs_m$ is uniformly bounded.
\end{itemize}
\end{defn}

We remark here briefly that one way to define finite decomposition complexity is to use a ``decomposition game", which means \emph{a priori} that the choices of $R_i$ might depend on the previous families $\Xs_0,\Xs_1,\ldots,\Xs_{i-1}$. Consequently, sFDC can be obviously implied from FDC.

\section{The main theorem}
In this section, we present our main result (Theorem~\ref{char for Roe alg Thm}), which gives several different pictures of how elements in $L^p$-Roe-like algebras may look like. We also prove the relatively easier part where straight finite decomposition complexity is not required, while leaving the rest of the proof to the next section after more technical tools are developed.

To state our main theorem, we need to introduce some notions as follows.

\begin{defn}[\cite{roe2003lectures}]
Let $(X,d)$ be a proper metric space. A function $g \in C_b(X)$ is called a \emph{Higson function} (also called a \emph{slowly oscillating function}), if for every $R>0$ and $\varepsilon>0$, there exists a compact set $A \subseteq X$ such that for any $x,y \in X\backslash A$ with $d(x,y) < R$, then $|g(x)-g(y)| < \varepsilon$. The set of all Higson functions on $X$ is denoted by $C_h(X)$.
\end{defn}

\begin{defn}\cite[Definition 2.6]{spakula2017relative},
Let $(X,d)$ be a metric space. A bounded sequence $(f_n)_{n \in \mathbb N}$ in $C_b(X)$ is called \emph{very Lipschitz}, if for every $L>0$, there exists $n_0 \in \mathbb N$ such that $f_n$ is $L$-Lipschitz for any $n \geq n_0$. Let $\VL$ denote the set of all very Lipschitz bounded sequences in $C_b(X)$. Define
$$\VLinf:=\VL \big/ \big\{(f_n)_{n\in \mathbb N} \in \VL: \lim_{n \rightarrow \infty} \|f_n\|=0\big\}.$$
\end{defn}

It is known from \cite{spakula2017relative} that $\VL$ is a $C^*$-subalgebra of $\ell^\infty(\mathbb{N},C_b(X))$ and $\VLinf$ is a $C^*$-subalgebra of $(C_b(X))_\infty$. In the following, we will view both $\VLinf$ and $B \subseteq \BLpE$ as Banach subalgebras of $\BLpE_\infty$, and consider the \emph{relative commutant}:
$$B \cap \VLinf'=\{b\in B: b \mbox{~commutes~with~elements~in~}\VLinf \}.$$
It is clear that any operator in $\BLpE$ with finite propagation commutes with $\VLinf$. Hence, by taking limits it follows that
\begin{equation}\label{iv to i}
\Roe \subseteq B \cap \VLinf'.
\end{equation}
The converse inclusion is also true provided the space $X$ has straight finite decomposition complexity and this is included in our main theorem as follows (this is the complete version of Theorem A in Section 1):

\begin{thm}\label{char for Roe alg Thm}
Let $(X,d)$ be a proper metric space equipped with a Radon measure $\mu$ whose support is $X$, and $p \in [1,+\infty)$. Suppose $E$ is a Banach space and $B \subseteq \BLpE$ is a Banach subalgebra such that $C_b(X)BC_b(X)=B$ and $B$ is closed under block cutdowns. Then for $b \in B$, the following are equivalent:
\begin{enumerate}
  \item[\emph{(i).}] $[b,f]=0$ for all $f \in \VLinf$;
  \item[\emph{(ii).}] $b$ is quasi-local;
  \item[\emph{(iii).}] $[b,g] \in \K$ for any $g \in C_h(X)$.
\end{enumerate}
If $X$ has straight finite decomposition complexity, then these are also equivalent to:
\begin{enumerate}
  \item[\emph{(iv).}] $b \in \Roe$.
\end{enumerate}
\end{thm}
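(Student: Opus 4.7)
The plan is to prove the cycle (iv) $\Rightarrow$ (i) $\Rightarrow$ (ii) $\Rightarrow$ (iii) $\Rightarrow$ (ii) $\Rightarrow$ (i) to establish the equivalence of (i), (ii) and (iii) without using sFDC, and then close with (iii) $\Rightarrow$ (iv) under sFDC. The implication (iv) $\Rightarrow$ (i) is already recorded in \eqref{iv to i}. For (i) $\Rightarrow$ (ii), I argue by contrapositive: if $b$ is not quasi-local, there are $\varepsilon > 0$ and $f_n, f_n' \in C_b(X)_1$ with $d(\supp f_n, \supp f_n') > n$ and $\|f_n b f_n'\| \geq \varepsilon$. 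Setting $h_n(x) := \min\bigl(1, d(x, \supp f_n)/n\bigr)$ yields a $\tfrac{1}{n}$-Lipschitz function vanishing on $\supp f_n$ and equal to $1$ on $\supp f_n'$, so $(h_n)$ represents an element of $\VLinf$ while the identity $f_n[b,h_n]f_n' = f_n b f_n'$ forces $\|[b,h_n]\| \geq \varepsilon$, contradicting (i).

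For (ii) $\Rightarrow$ (iii), fix $g \in C_h(X)$ and $\varepsilon > 0$, pick $R$ witnessing $\varepsilon$-propagation of $b$ and a compact $A$ on whose complement $g$ varies by less than $\varepsilon$ over $R$-balls, choose $\chi \in C_c(X)$ with $\chi = 1$ on $A$, and split $[b,g] = [b, \chi g] + [b, (1-\chi)g]$. A cutoff in the $R$-neighbourhood of $\supp(\chi g)$ combined with $\varepsilon$-propagation places $[b, \chi g]$ within $O(\varepsilon)$ of $C_0(X)BC_0(X) \subseteq \K$, while an equicontinuous $R$-scale partition of unity on $X \setminus A$ combined with Higson slow variation gives $\|[b, (1-\chi)g]\| = O(\varepsilon)$; letting $\varepsilon \to 0$ forces $[b, g] \in \K$. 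For the asymmetric step (iii) $\Rightarrow$ (ii), I use the contrapositive: non-quasi-locality gives $f_n, f_n'$ as above, and after passing to a subsequence their supports escape every compact by a diagonal argument using the properness of $X$. Interpolating between the separated supports using distance-based Lipschitz functions with constants $L_n \to 0$ on the $n$-th scale, one assembles a single Higson function $g \in C_h(X)$ satisfying $g \equiv 0$ near $\supp f_n$ and $g \equiv 1$ near $\supp f_n'$ for every $n$. Then for any $\chi \in C_c(X)$ we eventually have $\|(1-\chi)[b, g](1-\chi)\| \geq \|f_n [b, g] f_n'\| = \|f_n b f_n'\| \geq \varepsilon$, so $[b, g] \notin \K$, contradicting (iii); the cutoffs $f_n$ and $f_n'$ play distinct left/right roles here, reflecting the absence of an involution on $B$. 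Finally (ii) $\Rightarrow$ (i) follows by a direct $R$-scale block decomposition: for $(f_n) \in \VL$ with Lipschitz constants $L_n \to 0$, pick $R$ from $\varepsilon$-propagation, introduce an equicontinuous $R$-scale partition via the block cutdown apparatus of Definition~\ref{block cut down}, and combine Lipschitz control with Lemma~\ref{SOT convergence lem} to bound $\|[b, f_n]\|$ by $O\bigl(L_n R \|b\| + \varepsilon \|f_n\|_\infty\bigr)$; letting $n \to \infty$ and then $\varepsilon \to 0$ yields $\|[b, f_n]\| \to 0$.

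For (iii) $\Rightarrow$ (iv) under sFDC, I fix $\varepsilon > 0$, select a rapidly growing sequence of scales $0 \leq R_1 < R_2 < \cdots$ adapted to the quasi-locality certificate of $b$, and invoke sFDC to obtain a finite chain $\{X\} = \Xs_0, \Xs_1, \ldots, \Xs_m$ with $\Xs_{i-1} \xrightarrow{R_i} \Xs_i$ and $\Xs_m$ uniformly bounded. At stage $i$, the forthcoming Proposition~\ref{estimate for cmmt cor} converts the Higson-commutator hypothesis (iii) into a quantitative estimate on the deviation between $b$ and its block-cutdown approximation adapted to the $R_i$-disjoint decomposition supplied by $\Xs_{i-1} \xrightarrow{R_i} \Xs_i$. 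After $m$ iterations one obtains an operator block-diagonal with respect to the uniformly bounded family $\Xs_m$, which therefore has finite propagation and lies in $B$ by closure under block cutdowns; summing the telescoping errors (with the auxiliary tolerances chosen summable to $\varepsilon$) produces a finite-propagation element of $B$ within $O(\varepsilon)$ of $b$, so $b \in \Roe$. The main obstacle is Proposition~\ref{estimate for cmmt cor} itself: for $p \in (1, \infty)$ one can use $L^p$-$L^q$ duality directly, but for $p = 1$ the lack of reflexivity forces the introduction of the intermediate space $L^0(X; E^*)$ squeezed between $C_0(X)$ and $L^\infty(X; E^*)$, and the commutator estimate must be carried out asymmetrically through this dual replacement.
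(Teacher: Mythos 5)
The equivalence ``(i) $\Leftrightarrow$ (ii)'', your direct argument for ``(ii) $\Rightarrow$ (iii)'', and the sFDC induction are all close in spirit to the paper and fine as sketches, but your proof of ``(iii) $\Rightarrow$ (ii)'' has a genuine gap. You claim that, after passing to a subsequence, the witnesses $f_n,f_n'$ of non-quasi-locality can be arranged so that \emph{both} supports escape every compact set. This is false: properness only prevents both supports from staying in a fixed bounded set simultaneously, not one of them. Concretely, take $p=1$, $X=\mathbb{N}$, $B=\mathfrak{B}(\ell^1(\mathbb{N}))$ and the rank-one operator $T\xi=\big(\sum_n\xi(n)\big)\delta_0$ of Example~\ref{uniform Roe alg}. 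Then $\|fTf'\|=|f(0)|\cdot\|f'\|_\infty$, so every pair with $\|fTf'\|\geq\varepsilon$ must have $0\in\supp(f)$; $T$ is not quasi-local, yet all left witnesses are pinned at a fixed compact set. In this pinned situation your concluding estimate $\|(1-\chi)[b,g](1-\chi)\|\geq\|f_n[b,g]f_n'\|$ is never available, because $\supp(f_n)$ never leaves $\supp(\chi)$. Moreover, even in the case where both families of supports do escape, you give no argument that a single Higson function $g$ exists with $g\equiv 0$ near \emph{every} $\supp(f_n)$ and $g\equiv 1$ near \emph{every} $\supp(f_n')$: these supports are arbitrary (possibly unbounded) subsets of $X$, and the constraints for different $n$ can conflict arbitrarily far from any basepoint, which is incompatible with $g$ being well defined, let alone slowly oscillating.

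This is exactly the difficulty the paper's argument is built to avoid: it proves ``(iii) $\Rightarrow$ (i)'' and splits into two cases according to whether the norm of $[b,f_k]$ concentrates against a fixed ball $B_{R_0}$ (Case I) or away from every ball (Case II). In the pinned Case I it crucially uses that the test functions come from a very Lipschitz sequence, hence are nearly constant on $B_{R_0}$, so one can subtract constants and exploit $\chi_{B_{R_0}}[b,\hat{f_k}]=\chi_{B_{R_0}}b\hat{f_k}$; your raw witnesses $f_n$ carry no such structure, so the pinned case genuinely needs a separate argument. In both cases the Higson function is assembled from annuli $e_{R_i}-e_{3R_{i-1}}$ around a basepoint (Lemma~\ref{VL to LH}), not from the supports of the witnesses, which is what makes infinitely many constraints compatible. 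You should either repair your step by such a case analysis and annular construction, or prove ``(iii) $\Rightarrow$ (i)'' directly. Two smaller points: in ``(ii) $\Rightarrow$ (iii)'' the cutoff $\chi$ must itself be chosen slowly varying so that $(1-\chi)g$ has small oscillation at scale $R$ across the transition region; and in the sFDC step, Proposition~\ref{estimate for cmmt cor} takes the hypothesis $b\in\Commut(L,\varepsilon)$, which is the quantitative form of (i) via Lemma~\ref{commut lemma} (and one also needs the two-family decomposition step handling $Z=X_0\cup X_1$, yielding four blocks and an $8\varepsilon$ error), so it cannot be fed condition (iii) directly --- your chain only closes once the implication from (iii) back to (i)/(ii) is fixed.
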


Recall that we have already explained in Remark~\ref{weak def rem} that Theorem~\ref{char for Roe alg Thm} is a slight generalisation of \cite[Theorem~2.8]{spakula2017relative} as condition~(\ref{commut condition 2.2}) is not required here. Also notice that (\ref{iv to i}) implies that ``(iv) $\Rightarrow$ (i)" holds generally and the converse implication is also true under the extra condition of straight finite decomposition complexity.

In the remaining of this section, we prove that (i), (ii) and (iii) in Theorem~\ref{char for Roe alg Thm} are all equivalent, and leave the implication ``(i) $\Rightarrow$ (iv)" to the next section, after we develop some technical tools such as Proposition~\ref{estimate for cmmt cor}.

\subsection{``(i) $\Leftrightarrow$ (ii)"}

We start with the proof of Theorem~\ref{char for Roe alg Thm}, ``(i) $\Leftrightarrow$ (ii)". The implication ``(i) $\Rightarrow$ (ii)" follows exactly from the same arguments in \cite{spakula2017relative}, while the proof of ``(ii) $\Rightarrow$ (i)" is slightly different from that one given in \cite{spakula2017relative} due to the absence of inner products. Fortunately, since both proofs are relativity short, we include the details for the convenience of the reader.

Let us begin with the following characterisation of the condition (i) in Theorem~\ref{char for Roe alg Thm}, which is proved in \cite{spakula2017relative} when $p=2$ and actually holds for general $p$.
\begin{lem}\cite[Lemma~3.1]{spakula2017relative}\label{commut lemma}
Let $p\in [1,\infty)$, $b \in \BLpE$ and $\varepsilon>0$. Then $\|[b,f]\| < \varepsilon$ for every $f \in \VLinf_1$ \emph{if and only if} there exists some $L>0$ such that $b \in \Commut(L,\varepsilon)$, where
$$\Commut(L,\varepsilon):=\big\{a \in \BLpE: \|[a,f]\| < \varepsilon, \mbox{~for~any~}L\mbox{-Lipschitz~} f\in C_b(X)_1\big\}.$$
\end{lem}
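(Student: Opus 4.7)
The plan is to bridge the quotient-algebra picture of $\VLinf$ with the pointwise picture of Lipschitz functions in $C_b(X)_1$, using the quotient-norm identity $\|[b, f]\| = \limsup_n \|[b, f_n]\|$ valid for any representative $(f_n) \in \VL$ of $f \in \VLinf$ (where $b$ and $f$ are viewed inside $\BLpE_\infty$ via the constant embedding and the multiplication representation). For the ``if'' direction, I would assume $b \in \Commut(L, \varepsilon)$ and let $f \in \VLinf_1$. I would first pass to a representative $(\tilde f_n) \in \VL$ with $\|\tilde f_n\|_\infty \leq 1$ for every $n$ by rescaling any chosen representative by $\min(1, 1/\|f_n\|_\infty)$; this preserves pointwise Lipschitz constants and, since $\limsup_n \|f_n\|_\infty \leq 1$, differs from the original by a null sequence (hence represents the same class). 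The very-Lipschitz property, applied with the fixed constant $L$, then yields $n_0$ such that $\tilde f_n$ is $L$-Lipschitz for all $n \geq n_0$. Each such $\tilde f_n$ lies in $C_b(X)_1$ and is $L$-Lipschitz, so $\|[b, \tilde f_n]\| < \varepsilon$ by the definition of $\Commut(L, \varepsilon)$, and taking $\limsup$ yields $\|[b, f]\| \leq \varepsilon$. The strict inequality in the stated form is recovered by a routine $\varepsilon'/\varepsilon$ gap (applying the same argument with some $\varepsilon' \in (0, \varepsilon)$ for which the hypothesis still holds), or observed to be interchangeable with $\leq$ for the intended application in the main theorem.

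For the ``only if'' direction, I would argue by contrapositive. If for every $L > 0$ we have $b \notin \Commut(L, \varepsilon)$, then for each $n \in \mathbb{N}$ there exists a $(1/n)$-Lipschitz function $f_n \in C_b(X)_1$ with $\|[b, f_n]\| \geq \varepsilon$. The resulting sequence $(f_n)$ is very Lipschitz, since for any $L > 0$ every $f_n$ with $n \geq 1/L$ is $L$-Lipschitz, and it is uniformly bounded by $1$. Hence $f := [(f_n)] \in \VLinf_1$, and
\[
\|[b, f]\| = \limsup_n \|[b, f_n]\| \geq \varepsilon,
\]
contradicting the assumed commutator bound. No substantial obstacle is anticipated: the proof is structurally a direct lift/project between the two pictures, with the only finesse being the strict versus non-strict inequality on the ``if'' side, handled as indicated above.
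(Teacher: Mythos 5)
Your proposal is essentially the route of the cited result: the paper itself does not reprove the lemma (it invokes \cite[Lemma~3.1]{spakula2017relative} and observes the $p=2$ argument carries over), and your two steps --- the quotient-norm identity $\|[b,f]\|=\limsup_n\|[b,f_n]\|$ combined with the very Lipschitz property applied at the fixed $L$ for the ``if'' direction, and $(1/n)$-Lipschitz witnesses assembled into a very Lipschitz sequence for the ``only if'' direction --- are exactly the standard argument. Nothing you use depends on $p=2$, inner products or an involution, so the proof is valid for all $p\in[1,\infty)$, which is the only point the present paper needs to note. The rescaling of a representative to land in $C_b(X)_1$ is also handled correctly (scaling by constants in $(0,1]$ does not increase Lipschitz constants and changes the sequence by a null sequence).

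The one step that does not work as written is your ``routine $\varepsilon'/\varepsilon$ gap''. Membership $b\in\Commut(L,\varepsilon)$ is a strict bound at each individual $L$-Lipschitz contraction, taken over a non-compact family, so the supremum of $\|[b,g]\|$ over that family may equal $\varepsilon$ without being attained; in that case there is no $\varepsilon'\in(0,\varepsilon)$ for which the hypothesis still holds, and the trick fails. What your ``if'' direction honestly delivers is $\|[b,f]\|\leq\varepsilon$ for every $f$ in the unit ball of $\VLinf$, not the strict inequality. Your fallback observation is the correct resolution and should be the stated one: in this paper the lemma is only ever applied either through the ``only if'' direction with $[b,f]=0$ for all $f\in\VLinf$ (which you prove with the strict inequality intact), or through the ``if'' direction quantified over all $\varepsilon>0$ (where $\leq\varepsilon$ for every $\varepsilon$ already forces $[b,f]=0$), so the distinction between $<$ and $\leq$ never plays a role.
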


\begin{proof}[Proof of Theorem \ref{char for Roe alg Thm}, ``(i)$\Leftrightarrow$ (ii)'']
Assume $b\in \BLpE$ such that $[b,\VLinf_1]=0$ and let $\varepsilon>0$. By Lemma \ref{commut lemma}, there exists some $L>0$ such that $b \in \Commut(L,\varepsilon)$. For any $f,g \in C_b(X)_1$ with $L^{-1}$-disjoint supports, we may choose an $L$-Lipschitz $h \in C_b(X)_1$ such that $h|_{\supp f} \equiv 1$ and $h|_{\supp g} \equiv 0$. In particular, $\|[b,h]\|< \varepsilon$. Therefore,
$$\|fbg\|=\|fhbg\|\leq \|[h,b]\|+\|fbhg\|<\varepsilon+0 = \varepsilon.$$
Hence, $b$ is quasi-local as desired.

On the other hand, we assume that for any $\varepsilon>0$, $b$ has finite $\varepsilon$-propagation. Without loss of generality, we may assume that $b$ is a contraction. Given $\varepsilon>0$, pick $N$ such that $6/N<\varepsilon/2$. By the hypothesis, $b$ has $\varepsilon/(2N^2)$-propagation at most $R>0$. For any $(2RN)^{-1}$-Lipschitz $f\in C_b(X)_1$, we claim that $\|[b,f]\|<\varepsilon$. In fact, take
$$A_1:=f^{-1}([0,\frac{1}{N}]), \andx A_i:=f^{-1}((\frac{i-1}{N},\frac{i}{N}]), \quad i=2,\ldots,N.$$
These sets partition $X$, and $A_i$ is $2R$-disjoint from $A_j$ for $|i-j|>1$. Now choose a partition of unity $e_1,\ldots,e_N \in C_b(X)$ such that $e_i$ is supported in $\N_{R/2}(A_i)$. Thus, $\|e_ibe_j\|< \varepsilon/(2N^2)$ for $|i-j|>1$. Meanwhile, we have
$$f \approx_{1/N} \sum_{i=1}^{N}\frac{i}{N}e_i.$$
Hence, it follows that
\begin{eqnarray*}
\|[f,b]\| &\leq&  \frac{2}{N} + \big\| \sum_{i=1}^N [\frac{i}{N}e_i,b]  \big\| \\
&=& \frac{2}{N} + \big\| \big(\sum_{i=1}^N \frac{i}{N}e_ib\big)\big( \sum_{j=1}^N e_j \big) - \big( \sum_{i=1}^N e_i \big) \big(\sum_{j=1}^N \frac{j}{N}be_j\big) \big\| \\
&\leq& \frac{2}{N} + \sum_{|i-j|>1} \|e_ibe_j\| + \left\| \sum_{|i-j|\leq 1}(\frac{i}{N}-\frac{j}{N})e_ibe_j \right\|.
\end{eqnarray*}
Each term in the first sum is dominated by $\frac{\varepsilon}{2N^2}$, hence $\sum_{|i-j|>1} \|e_ibe_j\| < \varepsilon/2$.
The second sum can be broken into four sums: note that the terms vanish when $i=j$; what remain are $j=i+1$ and $j=i-1$, and we break each of these further into even and odd parts. By Lemma \ref{SOT convergence lem}, each of these terms has norm at most $\frac{1}{N}$. Hence, we have that
$$\|[f,b]\|< \frac{2}{N}+\frac{\varepsilon}{2}+\frac{4}{N}< \varepsilon.$$
So we complete the proof by Lemma~\ref{commut lemma}.
\end{proof}

\subsection{``(i) $\Leftrightarrow$ (iii)"}
Now we move on to Theorem~\ref{char for Roe alg Thm}, ``(i) $\Leftrightarrow$ (iii)". Here our major work is focused on omitting condition \eqref{commut condition 2.2}, as well as providing a ``non-symmetric" version of the argument given in \cite{spakula2017relative} for $p=2$. However, the main body of the proof is still very similar to that of the original $p=2$ case \cite{spakula2017relative}, so we just outline the proof and highlight the differences we make here.

First of all, we recall that the proof of ``(i) $\Rightarrow$ (iii)" given in \cite{spakula2017relative} requires condition~(\ref{commut condition 2.2}):
$$
[C_0(X),B] \subseteq \K.
$$
After a careful reading of the proof, we realise that it is unnecessary to assume the entire $B$ essentially commuting with $C_0(X)$ but only a closed subalgebra of $B$ as shown in the following lemma:

\begin{lem}\label{2.2 replacement}
Let $p\in [1,\infty)$ and $B$ be a Banach subalgebra of $\BLpE$ such that $C_b(X)BC_b(X)=B$. If $b \in B$ satisfies $[b,\VLinf]=0$, then $[b,C_0(X)] \subseteq \K$.
\end{lem}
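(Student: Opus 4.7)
The plan is to first invoke the implication ``(i)$\Rightarrow$(ii)'' of Theorem~\ref{char for Roe alg Thm}, already established in the preceding subsection without using this lemma. That upgrades the abstract hypothesis $[b,\VLinf]=0$ into the concrete statement that $b$ is quasi-local, and my proof of $[b,g]\in\K$ for $g\in C_0(X)$ will use only quasi-locality together with the standing assumption $C_b(X)BC_b(X)=B$.

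Next I will reduce to the case $g\in C_c(X)$. Since $\K$ is norm-closed and $\|[b,g]-[b,g']\|\leq 2\|b\|\cdot\|g-g'\|_\infty$, and $C_c(X)$ is sup-norm dense in $C_0(X)$, it is enough to prove $[b,g]\in\K$ for a compactly supported $g$, which I normalise by $\|g\|_\infty\leq 1$.

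For such a $g$ and any prescribed $\varepsilon>0$, quasi-locality supplies $R>0$ such that $\|fbf'\|<\varepsilon$ whenever $f,f'\in C_b(X)_1$ satisfy $d(\supp f,\supp f')>R$. Properness of $X$ makes $\N_{R+1}(\supp g)$ pre-compact, so a standard Urysohn construction produces a cut-off $\phi\in C_c(X)$ with $0\leq\phi\leq 1$ and $\phi\equiv 1$ on $\N_{R+1}(\supp g)$. I then form the truncated commutator $T:=\phi b g - g b\phi$: since $\phi,g\in C_c(X)\subseteq C_0(X)$ and $b\in B$, both summands lie in $C_0(X)BC_0(X)$, so $T\in\K$. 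The error is $[b,g]-T=(1-\phi)bg-gb(1-\phi)$, a sum of two products in which the bracketing functions $g$ and $1-\phi$ are in $C_b(X)_1$ with supports separated by more than $R$; quasi-locality bounds each of these by $\varepsilon$, yielding $\|[b,g]-T\|\leq 2\varepsilon$. Letting $\varepsilon\to0$ and using that $\K$ is closed gives $[b,g]\in\K$.

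I do not anticipate a genuine obstacle. The only delicate point is making sure that the inner cut-off $\phi$ can be taken in $C_c(X)$ rather than merely in $C_b(X)$, so that $T$ actually sits in $C_0(X)BC_0(X)$ rather than just in $B$; this is precisely where properness of $X$ is used, via pre-compactness of $\N_{R+1}(\supp g)$.
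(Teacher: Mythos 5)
Your proof is correct, and it is not circular: the implication ``(i) $\Rightarrow$ (ii)'' of Theorem~\ref{char for Roe alg Thm} is established in the preceding subsection using only Lemma~\ref{commut lemma} and Lemma~\ref{SOT convergence lem}, while Lemma~\ref{2.2 replacement} is needed only for ``(i) $\Rightarrow$ (iii)'', so you may legitimately upgrade $[b,\VLinf]=0$ to quasi-locality. The paper, however, does not take this detour: it argues directly from condition (i), choosing an explicit very Lipschitz sequence $(f_k)$ with $f_k$ vanishing on $\supp(g)$ and $f_k\equiv 1$ outside a large ball, so that $\|[b,f_k]\|\to 0$ by hypothesis, and then showing $\|[b,g]-(1-f_k)[b,g](1-f_k)\|\to 0$, with each compression $(1-f_k)[b,g](1-f_k)$ lying in $C_c(X)BC_c(X)\subseteq\K$. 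Your version instead truncates the commutator as $T=\phi bg-gb\phi$ and bounds the error $(1-\phi)bg-gb(1-\phi)$ by $2\varepsilon$ using finite $\varepsilon$-propagation; the estimates are sound (in particular $d(\supp(1-\phi),\supp(g))\geq R+1>R$, so each error term has norm $<\varepsilon$). What the paper's route buys is self-containedness, proving the lemma from (i) alone with a purely commutant-theoretic argument; your route imports the machinery of ``(i) $\Rightarrow$ (ii)'' but in exchange proves the slightly more flexible statement that \emph{every} quasi-local $b\in B$ satisfies $[b,C_0(X)]\subseteq\K$, and gives a cleaner quantitative truncation. One cosmetic point: to see $T\in\K$ without worrying whether $C_0(X)BC_0(X)$ means products or their span, sandwich with $\psi\in C_c(X)$ equal to $1$ on $\supp(\phi)\cup\supp(g)$, so that $T=\psi(\phi bg-gb\phi)\psi\in C_c(X)BC_c(X)$ since $\phi bg-gb\phi\in C_b(X)BC_b(X)=B$; this is the same mechanism the paper uses for its compressions.
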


\begin{proof}
Let $b\in B$ such that $[b,\VLinf]=0$. Since $\K$ is closed, we only need to prove $[b,g] \in \K$ for any $g \in C_c(X)$.

Fix a base point $x_0 \in X$. For each $k \in \mathbb{N}$, we may choose a $(k^{-1})$-Lipschitz function $f_k\in C_b(X)_1$ such that $f_k$ vanishes on $\supp(g)$ and $f_k|_{B_{R_k}(x_0)^c}=1$ for some sufficiently large $R_k>0$. Hence, the sequence $(f_k)_{k \in \mathbb{N}}\in \VLinf$ and $\|[b,f_k]\| \to 0$ for $k \to \infty$ by assumption. Since $gf_k=0$ for any $k\in \mathbb{N}$, it follows that
\begin{align*}
  \|[b,g]f_k\| = \|bgf_k-gbf_k\|= \|gbf_k\|
   =\|g[b,f_k]\|
   \leq  \|g\|\cdot  \|[b,f_k]\| \to 0,
\end{align*}
as $k \to \infty$.
Similarly, we have that $\|f_k[b,g]\| \to 0$ and $\|f_k[b,g]f_k\| \to 0$ as $k \to \infty$.

Moreover, we have that
$$\|[b,g]-(1-f_k)[b,g](1-f_k)\| \leq \|[b,g]f_k\|+\|f_k[b,g]\|+\|f_k[b,g]f_k\| \to 0,$$
as $k \to \infty$. Since $\supp(1-f_k) \subseteq B_{R_k}(x_0)$ and $C_b(X)BC_b(X)=B$, it follows that $(1-f_k)[b,g](1-f_k) \in C_c(X)BC_c(X)$. Hence, $[b,g] \in \K$.
\end{proof}

Replacing condition~(\ref{commut condition 2.2}) by Lemma~\ref{2.2 replacement} in the original proof for $p=2$ \cite{spakula2017relative}, we obtain a proof of Theorem~\ref{char for Roe alg Thm} ``(i) $\Rightarrow$ (iii)" without any further changes. Hence we omit the details.

Now we outline the proof for the other direction, ``(iii) $\Rightarrow$ (i)". Since $L^p$-Roe-like algebras may not possess a bounded involution in general, the proof becomes slightly different. Fix a base point $x_0 \in X$. For $R>0$, we define $e_R \in C_0(X)$ by
$$e_R(x):=\max\{ 0,1-d(x,B_R(x_0))/R \}.$$

\begin{lem}[\cite{spakula2017relative}, Lemma 5.4]\label{VL to LH}
For $(f_k)_{k=1}^\infty \in \VL$, a subsequence $(f_{k_i})_{i=1}^\infty$, and a sequence of positive numbers $(R_i)_{i=0}^\infty$ with $R_{i+1} \geq 6R_i$ for each $i$, define
$$g_{(f_{k_i}),(R_i)} := \sum_{i=1}^{\infty} f_{k_i}(e_{R_i}-e_{3R_{i-1}}).$$
Then $g_{(f_{k_i}),(R_i)} \in C_h(X)$.
\end{lem}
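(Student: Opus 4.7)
The plan is to unpack the Higson function definition directly, exploiting the geometric observation that the bumps $(e_{R_i} - e_{3R_{i-1}})_{i \geq 1}$ have pairwise disjoint supports forming well-separated annular regions around $x_0$. I would first record elementary facts about the cutoff $e_R$: it is $1/R$-Lipschitz, identically $1$ on $B_R(x_0)$, and vanishes outside $B_{2R}(x_0)$. Combined with the growth hypothesis $R_{i+1} \geq 6R_i$, this yields three key properties: (a) the $i$-th term $e_{R_i} - e_{3R_{i-1}}$ is supported in the annulus $B_{2R_i}(x_0) \setminus B_{3R_{i-1}}(x_0)$ and has Lipschitz constant at most $1/R_i + 1/(3R_{i-1})$; (b) on the ``gap'' $B_{3R_{i-1}}(x_0) \setminus B_{2R_{i-1}}(x_0)$ every term of the sum vanishes, so $g := g_{(f_{k_i}),(R_i)}$ vanishes there; (c) the distance between the supports of the $i$-th and $j$-th terms, for $i \neq j$, is at least $R_{\min(i,j)}$.

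Given $R, \varepsilon > 0$, set $M := \sup_k \|f_k\|_\infty < \infty$ and, using that $(f_k) \in \VL$, pick $n_0$ with $f_k$ being $\varepsilon/(2R)$-Lipschitz for every $k \geq n_0$. Choose $N$ large enough that $k_i \geq n_0$, $R_N > R$, and $MR(1/R_i + 1/(3R_{i-1})) < \varepsilon/2$ for all $i \geq N$. Set $A := \overline{B_{2R_{N-1}}(x_0)}$, which is compact by properness of $X$. For $x, y \notin A$ with $d(x,y) < R$, terms with index below $N$ contribute nothing outside $A$, and property (c) together with $R_i > R$ for $i \geq N$ forbids $x$ and $y$ from lying in the supports of two distinct indices. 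Consequently there is a single index $i \geq N$ for which $g(z) = f_{k_i}(z)(e_{R_i} - e_{3R_{i-1}})(z)$ for both $z \in \{x,y\}$; the point is that if, say, $y$ lies in a gap adjacent to the $i$-th support, then the second factor vanishes automatically at $y$, correctly recording $g(y) = 0$.

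A standard product-rule estimate then yields
\[
|g(x) - g(y)| \leq \tfrac{\varepsilon}{2R} \cdot d(x,y) \cdot 1 + M \cdot \bigl(\tfrac{1}{R_i} + \tfrac{1}{3R_{i-1}}\bigr) \cdot d(x,y) < \tfrac{\varepsilon}{2} + \tfrac{\varepsilon}{2} = \varepsilon,
\]
completing the verification that $g \in C_h(X)$. The only (minor) subtlety I would watch for is precisely the case above, where $x$ lies in a support and $y$ in a neighbouring gap: here one must resist case-splitting and observe that the single-term representation of $g$ remains consistent across the support-gap boundary, so one uniform Lipschitz estimate handles all configurations.
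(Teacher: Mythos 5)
Your argument is correct and follows the same direct verification as the proof of Lemma~5.4 in \cite{spakula2017relative}, which this paper cites without reproving: one uses that $e_R$ is $1/R$-Lipschitz, that the bumps $e_{R_i}-e_{3R_{i-1}}$ live in $R_{\min}$-separated annuli so that outside a large ball any two $R$-close points see at most one term, and then splits $|g(x)-g(y)|$ via the product rule into a very-Lipschitz contribution from $f_{k_i}$ and a small contribution from the bump's Lipschitz constant. The only point left implicit, which is routine, is that $g\in C_b(X)$ at all (boundedness by $\sup_k\|f_k\|_\infty$ and continuity because on each ball $B_{2R_n}(x_0)$ only finitely many terms are nonzero).
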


\begin{proof}[Sketch of the proof of Theorem \ref{char for Roe alg Thm} ``(iii) $\Rightarrow$ (i)"]
Fix a point $x_0 \in X$, and set $B_R:=B_R(x_0)$. Recall $b$ satisfies the condition that $[b,g] \in \K$ for any $g \in C_h(X)$. Now assume $b$ is a contraction, and that there exists some $f=(f_k)_{k=1}^\infty \in \VLinf$ such that $[b,f] \neq 0$. Take an $\varepsilon: 0< \varepsilon< \|[b,f]\|$, and consider two cases.

\emph{Case I.} There exists $R_0>0$ such that for any $S>0$, there exists infinitely many $k$:
$$\mbox{either}\quad \|\chi_{B_{R_0}}[b,f_k](1-\chi_{B_S})\| > \frac{\varepsilon}{5}, \quad \mbox{or} \quad \|(1-\chi_{B_S})[b,f_k]\chi_{B_{R_0}}\| > \frac{\varepsilon}{5}.$$
In other words, there exists $R_0>0$ with the property that
\begin{enumerate}[1)]
  \item either: there exists a sequence $S_1<S_2<\ldots$ tending to $\infty$, such that for any $n \in \mathbb N$, there exist infinitely many $k$ such that $\|\chi_{B_{R_0}}[b,f_k](1-\chi_{B_{S_n}})\| > \frac{\varepsilon}{5}$;
  \item or: there exists a sequence $S_1<S_2<\ldots$ tending to $\infty$, such that for any $n \in \mathbb N$, there exist infinitely many $k$ such that $\|(1-\chi_{B_{S_n}})[b,f_k]\chi_{B_{R_0}}\| > \frac{\varepsilon}{5}$.
\end{enumerate}
We only prove in the first situation, while the second is similar.

Since $(f_k)$ is very Lipschitz, $f_k|_{B_{R_0}}$ tends towards constant as $k \to \infty$. So without loss of generality, we can assume that $f_k|_{B_{R_0}}\equiv\gamma_k$, for all $k$. Setting $\hat{f_k}:=f_k-\gamma_k$ gives us another very Lipschitz sequence $(\hat{f_k})_{k=1}^\infty$ satisfying the same condition and $\hat{f_k}|_{B_{R_0}}\equiv 0$. Additionally, for all $k$, we have that
$$\chi_{B_{R_0}}[b,\hat{f_k}]=\chi_{B_{R_0}}b\hat{f_k}.$$
By assumption, there exists a sequence $S_1<S_2<\ldots$ tending to $\infty$, such that for any $n \in \mathbb N$, we can find infinitely many $k$ such that $\|\chi_{B_{R_0}}b\hat{f_k}(1-\chi_{B_{S_n}})\| > \frac{\varepsilon}{5}$.

As the original proof for $p=2$ \cite[Section 5]{spakula2017relative}, we may choose sequences: $k_1 < k_2 < \ldots$ and $R_1,R_2,\ldots$ satisfying 
\begin{eqnarray*}
  \|\chi_{B_{R_0}}b\hat{f_{k_i}}(e_{R_i}-e_{3R_{i-1}})\| >  \frac{\varepsilon}{10}.
\end{eqnarray*}
Applying Lemma \ref{VL to LH} to $(\hat{f_{k_i}})$ and $(R_i)$, we obtain $g \in C_h(X)$ defined by:
$$g:=g_{(\hat{f_{k_i}}),(R_i)} = \sum_{i=1}^{\infty} \hat{f_{k_i}}(e_{R_i}-e_{3R_{i-1}}).$$
Hence for any $S>R_0$, choose an $i$ such that $3R_{i-1}>S$, and we have
\begin{eqnarray*}
  \|[b,g](1-\chi_{B_S})\| & \geq & \|\chi_{B_{R_0}}[b,g](1-\chi_{B_S})(\chi_{B_{2R_i}}-\chi_{B_{3R_{i-1}}})\| \\
  &= & \|\chi_{B_{R_0}}b\hat{f_{k_i}}(e_{R_i}-e_{3R_{i-1}})\| \\
  &> & \frac{\varepsilon}{10},
\end{eqnarray*}
which contradicts with the hypothesis that $[b,g] \in \K$.

\emph{Case II.} For every $R>0$, there exists $S>0$ such that, for all but finitely many $k \in \mathbb N$, we have
$$\|\chi_{B_R}[b,f_k](1-\chi_{B_S})\| \leq \frac{\varepsilon}{5} \andx \|(1-\chi_{B_S})[b,f_k]\chi_{B_R}\| \leq \frac{\varepsilon}{5}.$$
Without loss of generality, we may assume that $S>R$.

Suppose we are given $R>0$ and $K \in \mathbb N$, and let $S$ be given as above. Then there exists $k \geq K$ such that
\begin{equation}\label{commut est 1}
  \|\chi_{B_R}[b,f_k](1-\chi_{B_S})\| \leq \frac{\varepsilon}{5}, \andx \|(1-\chi_{B_S})[b,f_k]\chi_{B_R}\| \leq \frac{\varepsilon}{5}.
\end{equation}
By assumption, we have $\|[b,f_k]\|> \varepsilon$; and since $(f_k)$ is very Lipschitz, we can also assume that $f_k|_{B_S} \approx_{\varepsilon/10} \gamma$ for some constant $\gamma$. Hence, we have:
\begin{equation}\label{commut est 2}
  \|\chi_{B_S}[b,f_k]\chi_{B_S}\| \leq 2 \cdot \frac{\varepsilon}{10}\cdot\|b\| \leq \frac{\varepsilon}{5}.
\end{equation}
Now cutting the space by $B_R, B_R^c$ and $B_S, B_S^c$, we have the following decomposition for the operator $T=[b,f_k]$ (recall we assume that $S>R$):
$$T=(1-\chi_{B_R})T(1-\chi_{B_R})+\chi_{B_R}T\chi_{B_S}+\chi_{B_R}T(1-\chi_{B_S})+(\chi_{B_S}-\chi_{B_R})T\chi_{B_R}+(1-\chi_{B_S})T\chi_{B_R}.$$
From Inequalities (\ref{commut est 1}), the norms of the third and fifth items are less than or equal to $\frac{\varepsilon}{5}$; and from Equality (\ref{commut est 2}), the norms of the second and fourth items are less than or equal to $\frac{\varepsilon}{5}$ as well. Hence by triangle inequality, we have:
\begin{eqnarray*}
  \|(1-\chi_{B_R})[b,f_k](1-\chi_{B_R})\| & \geq & \|[b,f_k]\|-(\|\chi_{B_R}[b,f_k]\chi_{B_S}\| + \|(\chi_{B_S}-\chi_{B_R})[b,f_k]\chi_{B_R}\|) \\
   & & - (\|\chi_{B_R}[b,f_k](1-\chi_{B_S})\| + \|(1-\chi_{B_S})[b,f_k]\chi_{B_R}\|)\\
   &>& \varepsilon- \frac{2\varepsilon}{5} - \frac{2\varepsilon}{5}\\
   &=& \frac{\varepsilon}{5}.
\end{eqnarray*}
In conclusion, for every $R>0$ and $K \in \mathbb N$, there exists $k \geq K$ such that:
\begin{equation}\label{commut est fin}
  \|(1-\chi_{B_R})[b,f_k](1-\chi_{B_R})\|  >  \frac{\varepsilon}{5}.
\end{equation}
As the original proof for $p=2$ \cite[Section 5]{spakula2017relative}, we may choose sequences: $k_1 < k_2 < \ldots$ and $R_1,R_2,\ldots$ satisfying $R_i \geq 6R_{i-1}$ and
$$\|(\chi_{B_{R_i}}-\chi_{B_{6R_{i-1}}})[b,f_{k_i}](\chi_{B_{R_i}}-\chi_{B_{6R_{i-1}}})\| >  \frac{\varepsilon}{5}.$$
Applying Lemma \ref{VL to LH} to $(f_{k_i})$ and $(R_i)$, we obtain $g \in C_h(X)$ defined by:
$$g:=g_{(f_{k_i}),(R_i)} = \sum_{i=1}^{\infty} f_{k_i}(e_{R_i}-e_{3R_{i-1}}).$$
By the choice of $(R_i)$ above, for any $i$, we have
$$(\chi_{B_{R_i}}-\chi_{B_{6R_{i-1}}})[b,g](\chi_{B_{R_i}}-\chi_{B_{6R_{i-1}}})=(\chi_{B_{R_i}}-\chi_{B_{6R_{i-1}}})[b,f_{k_i}](\chi_{B_{R_i}}-\chi_{B_{6R_{i-1}}}).$$
Hence for any $S>0$, choose an $i$ such that $6R_{i-1}>S$, and we have
\begin{eqnarray*}
  \|[b,g](1-\chi_{B_S})\| & \geq & \|(\chi_{B_{R_i}}-\chi_{B_{6R_{i-1}}})[b,g](\chi_{B_{R_i}}-\chi_{B_{6R_{i-1}}})\| \\
  &= & \|(\chi_{B_{R_i}}-\chi_{B_{6R_{i-1}}})[b,f_{k_i}](\chi_{B_{R_i}}-\chi_{B_{6R_{i-1}}})\| \\
  &> & \frac{\varepsilon}{5},
\end{eqnarray*}
which contradicts with the hypothesis that $[b,g] \in \K$.
\end{proof}

\section{Proof of ``(i) $\Leftrightarrow$ (iv)''}
In this section, we will prove the remaining case of ``(i) $\Leftrightarrow$ (iv)" in Theorem~\ref{char for Roe alg Thm}. Recall that as explained in Section 3, ``(iv) $\Rightarrow$ (i)" holds in general. So we will only focus on the opposite implication ``(i) $\Rightarrow$ (iv)".

A key ingredient to prove ``(i) $\Rightarrow$ (iv)'' is to approximate a bounded operator via its block cutdowns as indicated in \cite[Corollary 4.3]{spakula2017relative} for the case of $p=2$.
Unfortunately, their proof uses the technique of von Nuemann algebra, which is no longer available for $p \neq 2$. Hence we need to search for a substitution of \cite[Corollary 4.3]{spakula2017relative}, and we figure out the following crucial result, which might be of independent interest to experts in Banach space theory.

\begin{prop}\label{estimate for cmmt cor}
Let $(X,d)$ be a proper metric space equipped with a Radon measure $\mu$ whose support is $X$ and $p \in [1,+\infty)$. Suppose $E$ is a Banach space, $a \in \BLpE$ and $a\in \Commut(L,\varepsilon)$ for some $L,\varepsilon>0$. Let $(e_j)_{j\in J}$ be an equicontinuous family of positive contractions in $C_b(X)$ with $2/L$-disjoint supports, and define $e:=\sum_{j\in J} e_j$. Then, we have
$$\big\|eae-\sum_{j\in J}e_jae_j\big\| \leq \varepsilon.$$
\end{prop}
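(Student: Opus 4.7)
The plan is to construct $L$-Lipschitz bump functions adapted to $(e_j)_{j\in J}$ and then convert a uniform family of commutator bounds into the desired operator-norm estimate on $T:=eae-\sum_j e_j a e_j$.

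\textbf{Step 1 (Lipschitz bumps).} I will define $\phi_j(x):=\max\{0,\,1-L\,d(x,\mathrm{supp}(e_j))\}$. Each $\phi_j$ is an $L$-Lipschitz element of $C_b(X)_1$ with $\phi_j\equiv 1$ on $\mathrm{supp}(e_j)$ and support inside the closed $1/L$-neighbourhood of $\mathrm{supp}(e_j)$. The $2/L$-disjointness hypothesis makes the supports of $(\phi_j)_j$ pairwise disjoint; moreover a short triangle-inequality calculation gives the geometric estimate
\[
\phi_j(x)+\phi_k(y)\leq L\,d(x,y) \qquad \text{for } j\ne k,\ x\in\mathrm{supp}(\phi_j),\ y\in\mathrm{supp}(\phi_k).
\]
This inequality guarantees that for every $(\alpha_j)_j\in\mathbb{C}^J$ with $|\alpha_j|\leq 1$ the combination $f_\alpha:=\sum_j\alpha_j\phi_j$ still lies in $C_b(X)_1$ and is $L$-Lipschitz, so $\|[a,f_\alpha]\|<\varepsilon$ holds for \emph{every} such $\alpha$.

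\textbf{Step 2 (block identity).} Using $e_j\phi_j=e_j$ and $\phi_j(e-e_j)=0$ (both immediate from Step~1 and the disjointness) I obtain
\[
T=\sum_j e_j a(e-e_j)=\sum_j e_j\,[\phi_j,a]\,(e-e_j),
\]
the sums converging in (SOT) by Lemma~\ref{SOT convergence lem}. Since the left-factors $e_j$ have pairwise disjoint supports, for every $\xi\in\LpE$ the pieces $e_j a(e-e_j)\xi$ are disjointly supported, so
\[
\|T\xi\|_p^p=\sum_j\|e_j a(e-e_j)\xi\|_p^p.
\]

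\textbf{Step 3 (main estimate).} It remains to show this sum is at most $\varepsilon^p\|\xi\|_p^p$. The naive per-block estimate $\|e_j a(e-e_j)\xi\|_p<\varepsilon\|(e-e_j)\xi\|_p$ is too weak, because $\sum_j\|(e-e_j)\xi\|_p^p$ can be as large as $(|J|-1)\|e\xi\|_p^p$. For $p\in(1,\infty)$ I will instead dualise via Lemma~\ref{norm isom}: for $\eta\in\LqEst$ with $\|\eta\|_q\leq 1$, write $\eta_j:=e_j\eta$, $\xi_k:=e_k\xi$, and use $\phi_j\xi_k=0$ together with $\phi_j\eta_j=\eta_j$ to rewrite
\[
\eta(T\xi)=\sum_{j\ne k}\eta_j(a\xi_k)=\sum_{j\ne k}\eta_j([\phi_j,a]\xi_k),
\]
with the disjoint-support identities $\sum_j\|\eta_j\|_q^q\leq\|\eta\|_q^q$ and $\sum_k\|\xi_k\|_p^p\leq\|\xi\|_p^p$ in hand. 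Coupling these with the \emph{entire} family of bounds
\[
\Big|\sum_{j,k}(\alpha_k-\alpha_j)\eta_j(a\xi_k)\Big|=|\eta([a,f_\alpha]\xi)|<\varepsilon \qquad (|\alpha_j|\leq 1),
\]
the intermediate Banach-space lemma the authors announce in the introduction then delivers $|\eta(T\xi)|\leq\varepsilon$, hence $\|T\|\leq\varepsilon$.

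\textbf{Main obstacle and the case $p=1$.} Step~3 is the heart of the proof. The difficulty is that a single Lipschitz test function $f_\alpha$ only witnesses the \emph{antisymmetric} combination $\sum_{j,k}(\alpha_k-\alpha_j)e_j a e_k$ of off-diagonal blocks, whereas $T$ corresponds to the \emph{symmetric} pattern $\sum_{j\ne k}e_j a e_k$; reconciling the two inside the $L^p$-block structure is exactly where the authors' independent Banach-space result must do nontrivial work. In the case $p=1$, the lack of reflexivity of $L^1$ obstructs the direct duality argument, and I would address it — following the paper's hint — by replacing $L^\infty(X;E^*)$ with the intermediate space $L^0(X;E^*)$ when invoking the $L^1$-dual pairing through Lemma~\ref{norm isom}.
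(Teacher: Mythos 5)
Your Steps 1--2 are correct (the bumps $\phi_j$ are $L$-Lipschitz, $f_\alpha\in C_b(X)_1$ is $L$-Lipschitz for $|\alpha_j|\le 1$, and $T=\sum_{j\ne k}e_jae_k$), but Step 3 — which you yourself identify as the heart of the matter — is not a proof: you invoke "the intermediate Banach-space lemma the authors announce in the introduction" without stating or proving it, and that lemma (the expectation result, Lemma~\ref{unique expectation lemma} together with Lemma~\ref{construction}) is precisely the nontrivial content of the proof of this proposition, so it cannot be cited as a black box in a blind argument. Worse, the scalar data you propose to feed into such a lemma is provably insufficient. For fixed $\eta,\xi$ the whole family of bounds $\big|\sum_{j,k}(\alpha_k-\alpha_j)\eta_j(a\xi_k)\big|\le\varepsilon$, $|\alpha_j|\le1$, only says that $\sum_m|C_m-R_m|\le\varepsilon$, where $C_m=\sum_j\eta_j(a\xi_m)$ and $R_m=\sum_k\eta_m(a\xi_k)$ are the column and row sums of the array $M_{jk}=\eta_j(a\xi_k)$. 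If $M$ is symmetric with vanishing diagonal, all these quantities are zero while $\eta(T\xi)=\sum_{j\ne k}M_{jk}$ is completely unconstrained; hence no formal manipulation of these inequalities alone can yield $|\eta(T\xi)|\le\varepsilon$. To close the argument you must test the commutators $[a,f_\alpha]$ against vectors that themselves depend on $\alpha$, not against the single pair $(e\eta,e\xi)$.

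The missing idea is conjugation-averaging over the group of sign involutions. For each sign pattern $\alpha\in\{\pm1\}^J$ there is a single $L$-Lipschitz $f\in C_b(X)_1$ with $f|_{\supp(e_j)}\equiv(-1)^{\alpha_j}$, and with $u=\sum_j(-1)^{\alpha_j}p_j+(-1)^\beta q_c$ one gets $u(eae)-(eae)u=e[f,a]e$, of norm at most $\varepsilon$ since $a\in\Commut(L,\varepsilon)$; since each $u$ is an isometric involution, every conjugate $u(eae)u$ is within $\varepsilon$ of $eae$, and the average of $u(eae)u$ over all sign patterns is exactly the block cutdown $\sum_j e_jae_j$, which gives the estimate. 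The genuine work — and what your proposal omits — is making this average rigorous when $J$ is infinite and $\LpE$ is non-reflexive: the paper embeds $\BLpE$ into $\mathfrak{B}(\LpE,\LqEst^*)$ (with $q=0$ and $L^0(X;E^*)$ when $p=1$, as you anticipate), constructs the expectation via the Haar measure on the compact group $G$ in the weak* operator topology with the bound $\|\E_G(a)-a\|\le\sup_{u\in G}\|au-u^{(**)}a\|$, and identifies $\E_G$ with the explicit (SOT-convergent) block-cutdown idempotent by a uniqueness argument. Without carrying out this averaging step (or proving an equivalent substitute yourself), the proposal has a genuine gap exactly at the point where the proposition's difficulty lies.
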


The proof of the above proposition is technical and relatively long, so we decide to postpone it to Section \ref{app} for the convenience of the reader, and first show how to use the proposition to prove ``(i) $\Rightarrow$ (iv)". Let us start with the following lemma, which is a consequence of Proposition~\ref{estimate for cmmt cor} by the same proof of \cite[Lemma~4.6]{spakula2017relative}. It may be worth reminding the reader that for any $L,\varepsilon>0$, we denote $$\Commut(L,\varepsilon)=\big\{a \in \BLpE: \|[a,f]\| < \varepsilon, \mbox{~for~any~}L\mbox{-Lipschitz~} f\in C_b(X)_1\big\}.$$

\begin{lem}\label{decomp for operator lemma}
Let $\Xs$ and $\Ys$ be metric families of $X$ such that $\Xs \xlongrightarrow{4L^{-1}+4} \Ys$ for some $L>0$, and $a\in \BLpE$ be block diagonal with respect to $\Xs$ for $p\in [1,\infty)$. Let $\varepsilon>0$ be such that $a\in \Commut(L,\varepsilon)$. Then we can write:
\begin{equation}\label{EQ1}
a\approx_{8\varepsilon}a_{00}+a_{01}+a_{10}+a_{11},
\end{equation}
where each $a_{ii'}$ is of the form $\theta_{(f_k)_{k\in K}}(gag')$ (see (\ref{defn for theta}))for some $g,g'\in C_b(X)_1$ and some equicontinuous positive family $(f_k)_{k\in K}$ in $C_b(X)_1$ with disjoint supports, such that each $\supp(f_k)$ is contained in some set in $\N_{L^{-1}+1}(\Ys)$. In particular:
\begin{itemize}
  \item [(i)] each $a_{ii'}$ is block diagonal with respect to $\N_{L^{-1}+1}(\Ys)$,
  \item [(ii)] if $a\in \Commut(L',\varepsilon')$ for some $L',\varepsilon'>0$, then each $a_{ii'}$ is in $\Commut(L',\varepsilon')$ as well, and
  \item [(iii)] if $B\subseteq \BLpE$ is a Banach subalgebra such that $C_b(X)BC_b(X)= B$ and $B$ is closed under block cutdowns, and if $a$ is in $B$, then each $a_{ii'}$ is in $B$ as well.
\end{itemize}
\end{lem}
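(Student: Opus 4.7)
The plan is to adapt the strategy of \cite[Lemma~4.6]{spakula2017relative} (for $p=2$), with Proposition~\ref{estimate for cmmt cor} replacing the von Neumann algebra arguments used there. The argument constructs Lipschitz bumps adapted to the decomposition $\Xs \xrightarrow{4L^{-1}+4} \Ys$, cuts $a$ into four pieces according to the binary decomposition of each $Y_l$, and applies Proposition~\ref{estimate for cmmt cor} to reduce sums of the form $\sum_{k,k'} \tilde\phi_{ik}^l a \tilde\phi_{i'k'}^l$ to their diagonal parts.

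Concretely, unpack the block-diagonal hypothesis as $a = \sum_l e_l a e_l$ (SOT) with $(e_l)$ an equicontinuous family of positive contractions of pairwise disjoint supports, $\supp(e_l) \subseteq Y_l \in \Xs$. For each $l$, apply $\Xs \xrightarrow{4L^{-1}+4} \Ys$ to write $Y_l = X_0^l \cup X_1^l$ with $X_i^l = \bigsqcup_k X_{ik}^l$ a $(4L^{-1}+4)$-disjoint union of sets in $\Ys$. Define the $L$-Lipschitz bumps
\[
\phi_{ik}^l(x) := \max\bigl\{0,\, 1 - L\cdot d(x, X_{ik}^l)\bigr\},
\]
which are supported in $\N_{L^{-1}}(X_{ik}^l) \subseteq \N_{L^{-1}+1}(X_{ik}^l)$ and equal $1$ on $X_{ik}^l$; for fixed $(i,l)$ their supports are $(2L^{-1}+4) > 2/L$-disjoint, so Proposition~\ref{estimate for cmmt cor} applies. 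Turn the overlapping cover $Y_l = X_0^l \cup X_1^l$ into a Lipschitz partition of unity by setting $\Phi_i^l := \sum_k \phi_{ik}^l$, $\psi_0^l := \Phi_0^l$, and $\psi_1^l := (1-\Phi_0^l)\Phi_1^l$, with $\psi_0^l + \psi_1^l \equiv 1$ on $Y_l$. Writing $\psi_i^l = \sum_k \tilde\phi_{ik}^l$ (so $\tilde\phi_{0k}^l = \phi_{0k}^l$ and $\tilde\phi_{1k}^l = (1-\Phi_0^l)\phi_{1k}^l$), the refined bumps inherit $>2/L$-disjoint supports and equicontinuity.

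Since $\psi_0^l + \psi_1^l = 1$ on $\supp(e_l)$, one has the exact decomposition
\[
a = \sum_l e_l a e_l = \sum_{i,i' \in \{0,1\}} S_{ii'}, \qquad S_{ii'} := \sum_l e_l \psi_i^l a \psi_{i'}^l e_l.
\]
Pick $g_i \in C_b(X)_1$ extending $\psi_i^l$ on the disjoint $\supp(e_l)$'s, and set $f_{ikl} := e_l \tilde\phi_{ik}^l$: for each fixed $i$, the family $(f_{ikl})_{k,l}$ is equicontinuous with pairwise disjoint supports (across $l$ via $e_l$, within each $l$ via the bumps), each contained in $\N_{L^{-1}+1}(X_{ik}^l) \in \N_{L^{-1}+1}(\Ys)$. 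Define $a_{ii'}$ as a block cutdown of the form $\theta_{(f_{ikl})}(g_i a g_{i'})$ (with appropriate indexing in the cross terms). Applying Proposition~\ref{estimate for cmmt cor} twice per pair $(i,i')$---once on each side of $a$ within each $l$, combined via Lemma~\ref{SOT convergence lem} to pass from uniform-in-$l$ estimates to global bounds---gives $\|S_{ii'} - a_{ii'}\| \leq 2\varepsilon$, summing to the desired $\|a - \sum_{i,i'} a_{ii'}\| \leq 8\varepsilon$.

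Properties (i)--(iii) follow routinely: (i) from the support inclusions $\supp(f_{ikl}) \subseteq \N_{L^{-1}+1}(X_{ik}^l)$; (ii) because $\Commut(L', \varepsilon')$-membership is preserved under multiplication by $C_b(X)_1$-elements (direct from the definition) and under block cutdowns (by another application of Proposition~\ref{estimate for cmmt cor}); (iii) from $C_b(X) B C_b(X) = B$ and closure of $B$ under block cutdowns. The main technical obstacle lies in the cross terms $i \neq i'$, where the natural double sum $\sum_{k,k'} \tilde\phi_{ik}^l a \tilde\phi_{i'k'}^l$ cannot be reduced to a single diagonal via a single invocation of Proposition~\ref{estimate for cmmt cor} (which requires the same cutoff family on both sides); instead, two asymmetric applications are needed, and the trick $\psi_1^l = (1-\Phi_0^l)\Phi_1^l$ is engineered precisely to maintain both equicontinuity and the $>2/L$-disjointness of the refined bumps on the two sides simultaneously.
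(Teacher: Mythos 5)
The paper itself does not write this proof out: it simply says the lemma follows from Proposition~\ref{estimate for cmmt cor} ``by the same proof'' as \cite[Lemma~4.6]{spakula2017relative}, so your proposal has to be judged as a reconstruction of that argument. Its skeleton is the right one: unpack $a=\sum_l e_lae_l$, decompose each $Y_l$ via $\Xs\xrightarrow{4L^{-1}+4}\Ys$, build $L$-Lipschitz bumps $\phi^l_{ik}$ around the pieces, form a subordinate partition of unity on $\supp(e_l)$, split $a$ into four terms $S_{ii'}$, and use Proposition~\ref{estimate for cmmt cor} blockwise together with a Lemma~\ref{SOT convergence lem}-type patching to pass to block cutdowns. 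Indeed, for the diagonal terms your construction works: applying Proposition~\ref{estimate for cmmt cor} to $a$ with the family $(\tilde\phi^l_{ik})_k$ inside each block and compressing by $e_l$ gives $\|S_{ii}-\theta_{(e_l\tilde\phi^l_{ik})_{l,k}}(a)\|\leq\varepsilon$, which is of the required form with $g=g'=1$.

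However, there are two genuine gaps. First, the function $g_i\in C_b(X)_1$ ``extending $\psi^l_i$ on the disjoint $\supp(e_l)$'s'' need not exist: distinct blocks $\supp(e_l)$ can be arbitrarily close while the prescribed values jump (say $\psi^l_0\equiv 1$ near the gap on one block and $\psi^{l'}_0\equiv 0$ on the other), so there is no continuous extension; the repair is to absorb $e_l$ into the functions (e.g.\ $g_i=\sum_l e_l\psi^l_i$) and then redo the bookkeeping so the approximant genuinely has the form $\theta_{(f_k)}(gag')$ with a single pair $g,g'$ and a family with pairwise disjoint supports inside $\N_{L^{-1}+1}(\Ys)$ --- this is exactly where the slack between $\N_{L^{-1}}$ and $\N_{L^{-1}+1}$, and between $4L^{-1}$ and $4L^{-1}+4$, is meant to be spent, and your construction never uses it. Second, and decisively, the cross terms are not actually handled. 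Proposition~\ref{estimate for cmmt cor} compares $eae$ with $\sum_j e_jae_j$ for one and the same $2/L$-disjoint family on both sides; in $S_{01}=\sum_l (e_l\psi^l_0)\,a\,(\psi^l_1 e_l)$ the left cutoffs are subordinate to the pieces $X^l_{0k}$ and the right cutoffs to the unrelated pieces $X^l_{1k}$, which can interlace arbitrarily, so there is no common family to which the proposition applies, and ``two asymmetric applications, once on each side'' is not a defined operation --- the proposition inherently cuts both sides simultaneously. What is needed is an extra direct use of $a\in\Commut(L,\varepsilon)$ itself (commuting an $L$-Lipschitz cutoff such as $\Phi^l_0$ past $a$, at cost $\varepsilon$ per block, to bring both sides of the cross term onto a single decomposition) followed by one application of Proposition~\ref{estimate for cmmt cor}; note also that your $\psi^l_1=(1-\Phi^l_0)\Phi^l_1$ is only $2L$-Lipschitz, so commuting it costs $2\varepsilon$, which must be tracked against the $8\varepsilon$ budget. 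Since you flag the cross terms as ``the main technical obstacle'' and then assert rather than supply the mechanism, the claimed bound $\|S_{ii'}-a_{ii'}\|\leq 2\varepsilon$ for $i\neq i'$, and hence the lemma, is not established as written. (A small additional point: property (ii) follows from $[\theta_{(f_k)}(b),f]=\theta_{(f_k)}([b,f])$ and Lemma~\ref{SOT convergence lem}, not from ``another application of Proposition~\ref{estimate for cmmt cor}''.)
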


\begin{proof}[Proof of Theorem \ref{char for Roe alg Thm}, ``(i)$\Rightarrow$ (iv)"]
Although the proof is exactly the same as the one given in \cite{spakula2017relative}, we decide to include it here for the completeness and show the reader how straight finite decomposition complexity is used in the proof.

Take $b\in B$ such that it commutes with all $f\in \VLinf$. Given $\varepsilon> 0$, we aim to construct a finite propagation operator in $B$, which is $\varepsilon$-close to $b$. It follows from Lemma~\ref{commut lemma} that for every
$$\varepsilon_n:=\varepsilon/(2\cdot 8^n),$$
there exists some $L_n>0$ such that $b\in Commut(L_n,\varepsilon_n)$. Set
$$R_n:= 4(L_n^{-1}+1)+2(L_{n-1}^{-1}+1)+\cdots +2(L_1^{-1}+1).$$
Since $X$ has straight finite decomposition complexity, there exist metric families
$\Xs_0=\{X\},\Xs_1,\ldots,\Xs_m$ such that $\Xs_{n-1}\xlongrightarrow{R_n}\Xs_n$ for $n\in\{1,\ldots, m\}$ and $\Xs_m$ is uniformly bounded. An elementary observation shows that
\begin{equation}\label{EQ2}
\N_{(L_{n-1}^{-1}+1)+\cdots +(L_1^{-1}+1)}(\Xs_{n-1})\xlongrightarrow{4(L_n^{-1}+1)}\N_{(L_{n-1}^{-1}+1)+\cdots +(L_1^{-1}+1)}(\Xs_{n}).
\end{equation}
Thus, we can apply Lemma \ref{decomp for operator lemma} inductively with $L_n$, $\varepsilon_n$, the operators obtained in the previous iteration, and metric families in (\ref{EQ2}). After $m$ steps, we approximate $b$ by an operator $b'$ which is a sum of $4^m$ operators in $B$, each of which is block diagonal with respect to the bounded family $\N_{(L_{m}^{-1}+1)+\cdots +(L_1^{-1}+1)}(\Xs_m)$. Hence, operators which are block diagonal with respect to it clearly have finite propagation. Consequently, $b'\in\Roe$. Finally, the distance between $b$ and $b'$ is at most
$$8\varepsilon_{1}+4(8\varepsilon_{2}+4(8\varepsilon_{3}+4(\ldots)))=\varepsilon(\frac{1}{2}+\frac{1}{4}+\frac{1}{8}+\ldots)=\varepsilon$$
by Lemma~\ref{decomp for operator lemma}. So we finish the proof.
\end{proof}

\subsection{Approximation via block cutdowns}\label{app} Finally, we complete the proof of Proposition~\ref{estimate for cmmt cor} as promised before.

The main difficulty is the lack of reflexivity of the $L^p$-Bochner space $\LpE$ for general $p$ and general Banach space $E$ (see e.g. \cite{MR1027088, MR0453964, MR0206190}), which impedes us from applying the original proof in \cite{spakula2017relative} directly. Instead, we establish some substituting results in functional analysis and state them in the context of general Banach spaces, which conceivably would be of independent interests.

\emph{In the rest of this subsection, suppose $\X$ is a Banach space and $\Xl$ is a closed subspace of the dual space $\X^*$, which separates points in $\X$} (i.e., for any nonzero $\xi \in \X$, there exists some $\eta \in \Xl$ such that $\eta(\xi) \neq 0$). The inclusion $i: \Xl \hookrightarrow \X^*$ induces a surjective adjoint map $i^*: \X^{**} \rightarrow \Xl^*$. Composing it with the canonical map from $\X$ into its double dual $\X^{**}$, we obtain the following map
\begin{equation}\label{def for tau}
  \tau: \X \rightarrow \Xl^*.
\end{equation}
It is clear that $\tau$ is injective, as $\Xl$ separates points in $\X$.

For any $\theta \in \Xl^*$ and $\eta \in \Xl$, we use the notation $\l \theta,\eta \r$ for $\theta(\eta)$. Consider the Banach space $\BXXl$ of all bounded operators from $\X$ to $\Xl^*$, equipped with the \emph{weak* operator topology (W*OT)}\footnote{In \cite{spakula2017relative} \v{S}pakula and Tikuisis considered the weak operator topology (WOT) instead. However, (WOT) and (W*OT) agree when $\X^* \cong \X$ and taking $\Xl:=\X^*$.} defined as follows: a net $\{T_\alpha\}$ converges to $T$ in $\BXXl$ if and only if for any $\xi \in \X$ and any $\eta \in \Xl$, we have
$$\l T_\alpha(\xi),\eta \r \rightarrow \l T(\xi),\eta\r.$$
The \emph{strong* topology with respect to $\Xl$} on $\BX$ is defined as follows: a net $\{T_\alpha\}$ converges to $T$ in $\BX$ if and only if for any $\xi \in \X$ and any $\eta \in \Xl$, we have
$$\|T_\alpha(\xi)-T(\xi)\|  \rightarrow 0 \andx \|T_\alpha^*(\eta)-T^*(\eta)\| \rightarrow 0.$$

We say that $\Xl$ is \emph{$a^*$-invariant} for $a\in \BX$ if $a^*(\Xl) \subseteq \Xl$. In this case, the restriction $a^*|_{\Xl}$ belongs to $\mathfrak{B}(\Xl)$. Hence, its adjoint $(a^*|_{\Xl})^*$ belongs to $\BXl$ as well. In order to simplify notations, we write $a^{(**)}$ instead of $(a^*|_{\Xl})^*$. Clearly, for any $\zeta \in \Xl^*$ and $\eta \in \Xl$ we have:
$$\l a^{(**)}\zeta, \eta\r=\l \zeta, a^*\eta \r.$$
Moreover, it is easy to check that if $\Xl$ is $a^*$-invariant for some $a\in \BX$, then
\begin{equation}\label{comm diagm}
  a^{(**)}\tau=\tau a.
\end{equation}
In other words, the following diagram commutes:
\begin{displaymath}
\xymatrix@=1.3cm{
  \X \ar@{^{(}->}[r]^-{\textstyle \tau} \ar[d]_{\textstyle a}& \Xl^* \ar[d]^{\textstyle a^{(**)}}  \\
  \X  \ar@{^{(}->}[r]^-{\textstyle \tau} & \Xl^*.    }
\end{displaymath}

We say that $\Xl$ is \emph{$\mathcal{A}^*$-invariant} for a subset $\mathcal{A}\subseteq \BX$ if $\Xl$ is $a^*$-invariant for all $a\in \mathcal{A}$. If $G$ is a subgroup of invertible isometries in $\BX$ and $\hat{\X}$ is $G^*$-invariant, then $u^*(\hat{\X})=\hat{\X}$ for all $u\in G$. It is clear that if $u$ is an invertible isometry, then so are $u^{*}$ and $u^{(**)}$. Moreover, $(u^{*})^{-1}=(u^{-1})^{*}$ and $(u^{(**)})^{-1}=(u^{-1})^{(**)}$, which are denoted by $u^{-*}$ and $u^{-(**)}$, respectively. Now suppose $(u_\alpha)$ and $u$ are invertible isometries in $G$ and $u_\alpha \rightarrow u$ in the strong* topology with respect to $\Xl$, then $\|u_\alpha^{-*}\eta- u^{-*}\eta \| \rightarrow 0$ for all $\eta \in \Xl$. Indeed, we have
\begin{equation}\label{estimate}
\|u_\alpha^{-*}\eta- u^{-*}\eta \|=\|\eta- u_\alpha^*u^{-*}\eta \|=\|u^*(u^{-*}\eta)- u_\alpha^*(u^{-*}\eta)\| \rightarrow 0.
\end{equation}

We have the following technical lemma, which generalises \cite[Lemma~4.1]{spakula2017relative}.

\begin{lem}\label{unique expectation lemma}
Suppose $G$ is an abelian subgroup of the group of invertible isometries in $\BX$, which is compact in the strong* topology with respect to $\Xl$. Suppose $\Xl$ is $G^*$-invariant, and define $\G':= \{a\in \BXXl: au=u^{(**)}a, \forall u \in G \}$.

Then there exists a unique idempotent linear contraction $\E_G: \BXXl \rightarrow \G'$ with the following properties:
\begin{enumerate}[1)]
  \item For any $b_1,b_2 \in G$ and $a \in \BXXl$, $\E_G(b_1^{(**)}ab_2)=b_1^{(**)}\E_G(a)b_2$.
  \item The restriction of $\E_G$ to the unit ball of $\BXXl$ is  (W*OT)-continuous.
\end{enumerate}
In this case, for any $a \in \BXXl$, we have that
\begin{equation}\label{norm estimamte for cmmt}
\|\E_G(a)-a\| \leq \sup_{u \in G} \|au-u^{(**)}a\|.
\end{equation}

\end{lem}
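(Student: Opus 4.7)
The plan is to construct $\E_G$ as a Haar average over $G$. One first checks that $G$, being compact in the strong* topology with joint continuity of multiplication (immediate from the definition of the strong* topology) and inversion (continuity on the adjoint side is \eqref{estimate}, and on the vector side follows from compactness of $G$ combined with joint continuity of multiplication, via a unique-limits-of-subnets argument), is a compact Hausdorff topological group and hence carries a normalised Haar probability measure $\mu$.

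Define $\E_G$ through its weak form: for $a \in \BXXl$, $\xi \in \X$ and $\eta \in \Xl$, set
$$\langle \E_G(a)\xi, \eta\rangle := \int_G \langle au\xi, u^{-*}\eta\rangle \, d\mu(u),$$
which formally represents the integral $\E_G(a) = \int_G u^{-(**)} a u \, d\mu(u)$. The integrand is continuous in $u$ (strong continuity of $u\mapsto u\xi$ and, thanks to \eqref{estimate}, norm continuity of $u\mapsto u^{-*}\eta$) and is bounded in modulus by $\|a\|\|\xi\|\|\eta\|$, so this defines $\E_G(a)\in \BXXl$ with $\|\E_G(a)\| \leq \|a\|$.

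Abelianness of $G$, the identity $(uv)^{(**)} = u^{(**)} v^{(**)}$, and a change-of-variable $w=uv$ in $\mu$ yield $\E_G(a)v = v^{(**)}\E_G(a)$ for every $v\in G$, so $\E_G(a)\in \G'$. If $a\in \G'$ already, each integrand collapses to $a$, giving $\E_G(a)=a$, so $\E_G$ is idempotent onto $\G'$. Property~1) is a parallel Haar-invariance calculation, using abelianness and the fact that $\E_G(a)\in\G'$. Property~2), W*OT-continuity on the unit ball, is a dominated convergence argument: for fixed $\xi,\eta$ the integrand is uniformly bounded by $\|\xi\|\|\eta\|$ on $\{\|a\|\le 1\}$ and converges pointwise in $u$ whenever $a_\alpha \to a$ in W*OT. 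The norm estimate \eqref{norm estimamte for cmmt} then follows from
$$\E_G(a) - a = \int_G u^{-(**)}\bigl(au - u^{(**)}a\bigr)\, d\mu(u),$$
together with the fact that each $u^{-(**)}$ is an isometry.

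Uniqueness will fall out cleanly from properties 1) and 2): given another such $\E'$, approximating the Pettis integral defining $\E_G(a)$ by Riemann sums in W*OT and using linearity, property 1), and W*OT-continuity of $\E'$ on the unit ball yields $\E'(\E_G(a)) = \E_G(\E'(a))$; since both $\E_G(a)$ and $\E'(a)$ lie in $\G'$ and each map restricts to the identity there, this forces $\E_G(a)=\E'(a)$. The main technical obstacle I anticipate is not algebraic but analytic: rigorously handling a weak integral with values in the non-reflexive space $\BXXl$ equipped with W*OT, which is precisely the reflexivity/asymmetry difficulty flagged at the start of Section~4. Working throughout via the bilinear pairing $\X\times\Xl \to \mathbb{C}$, rather than attempting strong or norm limits in $\BXXl$ itself, is the workaround that makes every manipulation above legitimate.
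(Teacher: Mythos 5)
Your overall route is the same as the paper's: average $u^{-(**)}au$ against the normalised Haar measure of the compact abelian group $G$, define the average weakly through the pairing of $\X$ with $\Xl$, and then verify contractivity, the range condition via translation invariance, idempotency, property 1), the estimate \eqref{norm estimamte for cmmt}, and uniqueness by pushing the second expectation through Riemann-sum approximations. All of those steps are fine as you state them (your preliminary check that $G$ is a topological group, which the paper takes for granted, is a harmless addition; note that continuity of inversion on the vector side also follows directly from $\|u_\alpha^{-1}\xi-u^{-1}\xi\|=\|\xi-u_\alpha u^{-1}\xi\|$, no compactness needed). The one genuine gap is your proof of property 2). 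Dominated convergence is a statement about sequences, not nets: a uniformly bounded net of continuous functions on $G$ can converge pointwise while its integrals fail to converge (index bump functions by pairs consisting of a finite set, on which the function equals $1$, and the width of its support). Since no separability of $\X$ or $\Xl$ is assumed, the unit ball of $\BXXl$ in the weak* operator topology is in general not metrizable, so the continuity asserted in 2) must be verified against nets, and ``pointwise convergence of the integrands plus a uniform bound'' does not deliver it. This is not cosmetic: the net version of 2) is exactly what your (and the paper's) uniqueness argument invokes when passing $\E'$ through the W*OT limit of the Riemann sums, which themselves form a net.

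The repair is the paper's uniformity trick. For fixed $\xi\in\X$, $\eta\in\Xl$ and $\varepsilon>0$, compactness of $G$ together with the strong* continuity of $u\mapsto u\xi$ and the norm continuity of $u\mapsto u^{-*}\eta$ from \eqref{estimate} produce a finite Borel partition $\{W_k\}_{k=1}^n$ of $G$ and points $w_k\in W_k$ such that
\begin{equation*}
\Big|\langle \E_G(a)\xi,\eta\rangle-\sum_{k=1}^n \mu_G(W_k)\,\langle w_k^{-(**)}aw_k\xi,\eta\rangle\Big|\leq 2\varepsilon
\end{equation*}
\emph{uniformly} for all $a$ in the unit ball of $\BXXl$. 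Each map $a\mapsto \sum_{k}\mu_G(W_k)\,w_k^{-(**)}aw_k$ is (W*OT)-continuous, and a uniform-on-the-ball approximation by such maps yields genuine net continuity of $\E_G$ on the ball (and simultaneously legitimises the interchange of $\E'$ with the integral in the uniqueness step). With this substitution for your dominated-convergence step, your argument is complete and coincides with the paper's proof.
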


\begin{proof}
Since $G$ is compact with respective to the strong* topology, we consider the normalised Haar measure $\mu_G$ on $G$. Fix $a \in \BXXl$, the map
$$(G,\mbox{strong* topology}) \rightarrow (\BXXl, \W*OT)$$
defined by $u \mapsto u^{-(**)}au$ is clearly continuous. For each $\xi \in \X$ and each $a \in  \BXXl$, we may consider the following functional on $\Xl$:
$$\phi_{\xi,a}: \eta \mapsto \int_G\l u^{-(**)}au\xi, \eta \r \d \mu_G(u),$$
whose norm is bounded by $\|a\|\cdot \|\xi\|$. Therefore, we obtain a linear contraction $\E_G: \BXXl \rightarrow \BXXl$ given by $\E_G(a)(\xi)=\phi_{\xi,a}$, where $\xi\in \X$ and $a \in  \BXXl$.


It remains to check that $\E_G$ satisfies the required properties. First of all, we show that $\E_G$ has image in $\G'$. More precisely, $\E_G(a)v=v^{(**)}\E_G(a)$ for any $a \in \BXXl$ and any $v \in G$. Given $\xi \in \X$ and $\eta \in \Xl$, it follows from the right-invariance of the Haar measure $\mu_G$ that
\begin{eqnarray*}
  \l \E_G(a)v\xi, \eta \r &=& \int_G\l u^{-(**)}auv\xi, \eta \r \d \mu_G(u) \\
   &=& \int_G\l v^{(**)}u^{-(**)}au\xi, \eta \r \d \mu_G(u) \\
   &=& \int_G\l u^{-(**)}au\xi, v^*\eta \r \d \mu_G(u)\\
   &=& \l \E_G(a)\xi, v^*\eta \r\\
   &=& \l v^{(**)}\E_G(a)\xi, \eta \r.
\end{eqnarray*}
Hence, it follows that $\E_G(a)v=v^{(**)}\E_G(a)$.

Given $a \in \BXXl$, $\xi \in \X$ and $\eta \in \Xl$, we have
\begin{eqnarray*}
  |\l (\E_G(a)-a)\xi,\eta \r| & \leq & \int_G  \|u^{-(**)}au-a\| \cdot \|\xi\| \cdot \|\eta\| \d \mu_G(u)\\
   & = & \int_G  \|au-u^{(**)}a\| \cdot \|\xi\| \cdot \|\eta\| \d \mu_G(u)\\
   & \leq & \left(\sup_{u \in G} \|au-u^{(**)}a\| \right) \cdot \|\xi\| \cdot \|\eta\|.
\end{eqnarray*}
Hence, (\ref{norm estimamte for cmmt}) holds. In particular, $\E_G(a)=a$ for any $a \in \G'$, which implies that $\E_G: \BXXl \rightarrow \G'$ is an idempotent.

Now let us check that $\E_G(b_1^{(**)}ab_2)=b_1^{(**)}\E_G(a)b_2$ for any $b_1,b_2 \in G$ and $a \in \BXXl$. Since $G$ is abelian, we have
\begin{eqnarray*}
  \l \E_G(b_1^{(**)}ab_2)\xi,\eta \r &=& \int_G \l u^{-(**)}b_1^{(**)}ab_2u\xi, \eta \r \d \mu_G(u) \\
   &=& \int_G \l b_1^{(**)}u^{-(**)}aub_2\xi, \eta \r \d \mu_G(u) \\
   &=& \int_G \l u^{-(**)}aub_2\xi, b_1^{*}\eta \r \d \mu_G(u)\\
   &=& \l \E_G(a)b_2\xi,b_1^{*}\eta \r \\
   &=& \l b_1^{(**)}\E_G(a)b_2\xi,\eta \r,
\end{eqnarray*}
for any $\xi \in \X$ and any $\eta \in \Xl$. Hence, $\E_G(b_1^{(**)}ab_2)=b_1^{(**)}\E_G(a)b_2$.

In order to prove the (W*OT)-continuity of the restriction of $\E_G$ to the unit ball of $\BXXl$, we have to approximate the integration by finite Riemann sums \emph{uniformly} in the weak* operator topology:

Indeed, fix $\xi \in \X$, $\eta \in \Xl$ and $u \in G$ and for any $\varepsilon>0$, from (\ref{estimate}) there exists an open neighbourhood $V_u$ of $u$ in the strong* topology such that for all $v \in V_u$ and all $a \in \BXXl_1$, we have
$$|\l v^{-(**)}av\xi, \eta \r - \l u^{-(**)}au\xi, \eta \r| <\varepsilon.$$

Since $\{V_u: u \in G\}$ forms an open cover of $G$ and $G$ is compact in the strong* topology, there exists a finite subcover $\{V_{u_1},\ldots, V_{u_n}\}$ of $G$. Let $W_1=V_{u_1}$ and we put
$W_k=V_{u_k}\setminus \bigcup_{i=1}^{k-1} W_i$
for $1<k\leq n$. Without loss of generality, we may assume that $\{W_k\}_{k=1}^n$ forms a non-empty Borel partition of $G$. Take an arbitrary point $w_k$ in each $W_k$ for $k=1,\ldots,n$. Then for any $a \in \BXXl_1$ and $u \in W_k$, we have that
$$|\l u^{-(**)}au\xi, \eta \r - \l w_k^{-(**)}aw_k\xi, \eta \r| <2\varepsilon.$$ In particular, we have that
\begin{eqnarray*}
&  &\big| \l \E_G(a) \xi, \eta \r - \sum_{k=1}^n \l w_k^{-(**)} a w_k \xi, \eta \r \mu_G(W_k)\big| \\
& = & \big| \sum_{k=1}^n \int_{W_k} \l u^{-(**)}a u\xi, \eta \r \d \mu_G(u) - \sum_{k=1}^n \int_{W_k} \l w_k^{-(**)}a w_k \xi, \eta \r \d \mu_G(u) \big| \\
& \leq & \sum_{k=1}^n \int_{W_k}\big| \l u^{-(**)}a u\xi, \eta \r - \l w_k^{-(**)}a w_k \xi, \eta \r  \big| \d \mu_G(u)\\
& \leq &  \sum_{k=1}^n \int_{W_k} 2\varepsilon \d \mu_G(u) = 2\varepsilon,
\end{eqnarray*}
for all $a\in \BXXl_1$. Since the map $a\mapsto \sum_{k=1}^n\mu_G(W_k)w_k^{-(**)}aw_k$ is continuous in the weak* operator topology, it is not hard to see that the restriction of $\E_G$ to the unit ball of $\BXXl$ is (W*OT)-continuous as well.

Finally, we check the uniqueness of $\E_G$. If we have another $\E: \BXXl \rightarrow \G'$ satisfying all the conditions in the lemma, then:
\begin{align*}
\E_G(a) &= \E(\E_G(a))            && (\E \mbox{~fixes~} \G' )  \\
            &= \E\big({\scriptstyle\W*OT-}\int_G u^{-(**)}au \d \mu_G(u)\big)     && \\
            &= {\scriptstyle\W*OT}-\int_G \E(u^{-(**)}au) \d \mu_G(u)        &&(\W*OT\mbox{-continuity on the unit ball} ) \\
            &= {\scriptstyle\W*OT-}\int_G u^{-(**)}\E(a)u \d \mu_G(u)    &&(\mbox{Property~1)}) \\
            &= \E_G(\E(a))         && \\
            &= \E(a)         && (\E_G \mbox{~fixes~} \G' )
\end{align*}
for all $a\in \BXXl_1$. Thus, $\E_G=\E$ and we complete the proof.
\end{proof}

Now let us return to the setting of Proposition~\ref{estimate for cmmt cor}. Let $(X,d)$ be a proper metric space equipped with a Radon measure $\mu$ whose support is $X$. Let $q$ be the conjugate exponent to $p$ when $p\in (1,+\infty)$, and $q=0$ when $p=1$. Suppose $E$ is a Banach space and $(e_j)_{j\in J}$ is an equicontinuous family of positive contractions in $C_b(X)$ with uniformly disjoint supports.

In order to apply Lemma~\ref{unique expectation lemma}, we put $\X=\LpE$ and $\Xl=\LqEst$. Clearly, $\Xl$ is a closed subspace of the dual space $\X^*$, and separates points in $\X$ by Lemma~\ref{norm isom}. For each $j\in J$, set $A_j= \supp(e_j)$ and $B=X\setminus \big(\bigsqcup_{j\in J} A_j\big)$. We consider $p_j$ and $q_c$ in $\BLpE$ given by $p_j(\xi)=\chi_{A_j}\xi$ and $q_c(\xi)=\chi_{B}\xi$ for $\xi \in \LpE$. We define that
\begin{equation}\label{G}
G=\left\{\sum_{j\in J} (-1)^{\alpha_j} p_j + (-1)^\beta q_c: (\alpha_j)_{j\in J} \subseteq (\mathbb Z/2)^J, \beta \in \mathbb Z/2  \right\},
\end{equation}
where the sum converges in (SOT) and each element in $G$ can be presented by a function of the form $\sum_{j\in J} (-1)^{\alpha_j} \chi_{A_j} + (-1)^\beta \chi_{B}$ (in the pointwise convergence) via the faithful multiplication representation $\rho:L^\infty(X) \rightarrow \BLpE$.

Since $g^2=\text{id}$ for all $g\in G$, $G$ becomes a subgroup of the invertible isometry group in $\BLpE$, and clearly $G$ is abelian. Also notice that $\Xl$ is $L^\infty(X)^*$-invariant as for any $f\in L^\infty(X)\subseteq \BLpE$ and $\eta \in \Xl$, we have that $f^*(\eta)=f\cdot\eta$ by pointwise multiplications as functions on $X$.\footnote{It is worth noting that $C_0(X,E^*)$ is not $L^\infty(X)^*$-invariant and this is the reason why we use $L^0(X;E^*)$ instead of $C_0(X,E^*)$ when $p=1$.} Consequently, $\Xl$ is $G^*$-invariant since $G\subseteq \rho(L^\infty(X))$. Moreover, the strong* topology on $G$ with respect to $\Xl$ is compact, as it is homeomorphic to the product topology on $(\mathbb Z/2)^{J\cup \{\beta\}}$.\footnote{However, it is false for $L^\infty(X;E^*)$ and this is the reason why we use $L^0(X;E^*)$ instead of $L^\infty(X;E^*)$ when $p=1$.}

The next lemma is a replacement of \cite[Corollary~4.2]{spakula2017relative}, where \v{S}pakula and Tikuisis work within the setting of von Neumann algebras. Instead, we provide a direct and concrete proof here as follows:

\begin{lem}\label{construction}
As above, the group $G$ is defined as in (\ref{G}) and $q$ is the conjugate exponent to $p$ when $p\in (1,\infty)$, and $q=0$ when $p=1$. Let $\X=\LpE$ and $\Xl=\LqEst$. If $\G'=\{a\in \BXXl: au=u^{(**)}a, \forall u \in G \}$, then there exists a (W*OT)-continuous idempotent linear contraction  $\E: \BXXl \rightarrow \G'$ given by the formula
$$\E(x)=\sum_{j\in J} p_j^{(**)}xp_j + q_c^{(**)}xq_c,$$
where the sum converges in (SOT). Moreover, $\E(b_1^{(**)}ab_2)=b_1^{(**)}\E(a)b_2$ for any $b_1,b_2 \in G$ and $a \in \BXXl$. Consequently, we have that
$$\|\E(a)-a\| \leq \sup_{u \in G} \|au-u^{(**)}a\|, \quad \text{for any $a \in \BXXl$.}$$

\end{lem}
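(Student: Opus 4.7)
The plan is to apply Lemma~\ref{unique expectation lemma} to the group $G$, whose hypotheses (abelian, strong*-compact, $G^*$-invariance of $\Xl$) have been verified in the paragraphs just above, and thereby obtain an idempotent linear contraction $\E_G: \BXXl \to \G'$ with all the stated properties except for the explicit formula; then to compute $\E_G$ directly by evaluating the Haar integral that defines it.

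For the computation, parametrise $G$ as $\{\pm 1\}^{J \cup \{*\}}$ by writing $u_\epsilon = \sum_{j \in J} \epsilon_j p_j + \epsilon_* q_c$ for $\epsilon=(\epsilon_j)_{j \in J \cup \{*\}}$; the homeomorphism with the product space transports $\mu_G$ to the product of uniform Bernoulli measures, so $\int_G \epsilon_j \epsilon_k \d\mu_G(\epsilon) = \delta_{jk}$ for all $j,k \in J \cup \{*\}$. Since each $u_\epsilon$ is an involution, $u_\epsilon^{-(**)} = u_\epsilon^{(**)}$. For $x \in \BXXl$, $\xi \in \X$ and $\eta \in \Xl$, bilinearity together with the pointwise-multiplication description $u_\epsilon^*(\eta) = \sum_{j \in J} \epsilon_j \chi_{A_j}\eta + \epsilon_* \chi_B \eta$ yield
\begin{align*}
\l \E_G(x)\xi, \eta \r &= \int_G \l x u_\epsilon(\xi),\, u_\epsilon^*(\eta) \r \d\mu_G(\epsilon) \\
&= \sum_{j,k \in J \cup \{*\}} \Bigl(\int_G \epsilon_j \epsilon_k \d\mu_G(\epsilon) \Bigr)\, \l x P_j(\xi),\, P_k^*(\eta) \r,
\end{align*}
where $P_j := p_j$ for $j \in J$ and $P_* := q_c$. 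Only the diagonal $j=k$ terms survive, leaving $\l \E_G(x)\xi, \eta \r = \sum_{j \in J} \l p_j^{(**)} x p_j \xi, \eta \r + \l q_c^{(**)} x q_c \xi, \eta \r$.

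To upgrade this weak*-pointwise identity to the (SOT)-convergent explicit formula, I establish the convergence directly. For any finite $F \subseteq J$, $\xi \in \X$ and $\eta \in \Xl$ with $\|\eta\|_q \leq 1$, a H\"older estimate in the index $j$, entirely analogous to the one used in Lemma~\ref{SOT convergence lem}, gives
$$\Bigl|\Bigl\l \sum_{j \in F} p_j^{(**)} x p_j(\xi),\, \eta \Bigr\r\Bigr| \;\leq\; \|x\| \cdot \|\chi_{\sqcup_{j \in F} A_j} \xi\|_p,$$
so by Lemma~\ref{norm isom} the partial sums $\sum_{j \in F} p_j^{(**)} x p_j(\xi)$ form a norm-Cauchy net in $(\Xl)^*$ and converge in (SOT). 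Combined with the identity obtained above, this shows that the operator $\E(x)$ defined in the statement equals $\E_G(x)$ for every $x \in \BXXl$. All the remaining claims (idempotency, contraction, the bimodule property over $G$, $(\mathrm{W^*OT})$-continuity on the unit ball, and the norm estimate~(\ref{norm estimamte for cmmt})) then transfer immediately from Lemma~\ref{unique expectation lemma}.

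The main subtlety, which I expect to be the chief obstacle, is justifying the interchange of the Haar integration with the (possibly doubly-infinite) bilinear expansion of $u_\epsilon^{(**)} x u_\epsilon$ when $J$ is infinite. I plan to handle this by first truncating to a finite index subset $F \subseteq J$ and working with the finite subgroup of $G$ generated by the $p_j$ for $j \in F$ and the complementary projection $q_c + \sum_{j \notin F} p_j$, where the formula reduces to an elementary finite sum computation; then passing to the limit in $F$ using the $(\mathrm{W^*OT})$-continuity of $\E_G$ on the unit ball together with the (SOT)-convergence just established.
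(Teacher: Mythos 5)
Your route is genuinely different from the paper's, and it is essentially sound. The paper goes in the opposite direction: it takes the explicit formula $\E(x)=\sum_{j\in J}p_j^{(**)}xp_j+q_c^{(**)}xq_c$ as the definition and verifies all properties by hand — contraction via a H\"older-type estimate (with a separate, simpler estimate for $p=1$), range in $\G'$ and the identity $\E(x)=x$ for $x\in\G'$ by direct block computations (showing $p_j^{(**)}xp_i=0$ for $i\neq j$, etc.), and the bimodule property by commuting $b_k$ past the $p_j$'s; only at the very end does it invoke Lemma~\ref{unique expectation lemma}, through its uniqueness clause, to identify $\E$ with $\E_G$ and so inherit the norm estimate \eqref{norm estimamte for cmmt}. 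You instead build $\E_G$ abstractly from Lemma~\ref{unique expectation lemma} and then evaluate the Haar integral, using the identification of $\mu_G$ with the Bernoulli product measure and the orthogonality $\int_G\epsilon_j\epsilon_k\,\d\mu_G=\delta_{jk}$, to recover the block-cutdown formula; all properties then transfer at once. What your approach buys is a conceptual explanation of where the formula comes from (it is literally the group average); what the paper's approach buys is that it never has to justify interchanging the Haar integral with an infinite expansion, which is exactly the delicate point in yours.

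Two cautions on that delicate point. First, the tool you name for passing to the limit in $F$ — the (W*OT)-continuity of $\E_G$ on the unit ball — is continuity in the operator argument $x$ and does not by itself control a limit of averages over the finite subgroups $G_F$; what actually closes the argument is a uniform cylinder approximation of the integrand: for fixed $\xi,\eta$ and $u=u_\epsilon$, replacing $u$ by its truncation $u_F$ changes $\l xu\xi,u^*\eta\r$ by at most $\|x\|\bigl(2\|\eta\|\cdot\|\chi_{\sqcup_{j\notin F}A_j}\xi\|_p+2\|\xi\|_p\cdot t_F(\eta)\bigr)$, where $t_F(\eta)$ is the corresponding tail of $\eta$, uniformly in $u$; the Haar integral of the cylinder function is the finite Bernoulli average $\l(\sum_{j\in F}p_j^{(**)}xp_j+Q_F^{(**)}xQ_F)\xi,\eta\r$ with $Q_F=1-\sum_{j\in F}p_j$, and one then lets $F\uparrow J$. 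Second, for $p=1$ the tail estimate $t_F(\eta)=\sup_{j\notin F}\|\chi_{A_j}\eta\|_\infty\to 0$ is not automatic for $\eta\in L^0(X;E^*)$: it uses that the supports $A_j$ are uniformly disjoint in a proper space, hence escape every compact set — the same phenomenon behind the paper's choice of $L^0$ over $L^\infty$. With these repairs your plan goes through; without them, the ``bilinearity'' step expanding $u_\epsilon^*\eta$ is not justified in the $p=1$ case, since the double series is not absolutely convergent.
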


\begin{proof}
It is clear that $\E$ is a (W*OT)-continuous linear map on $\BXXl$ and the sum defining $\E$ converges in (SOT), so we leave the details to the readers.

Let us first verify that $\E$ is a contraction. When $p=1$, we have
\begin{eqnarray*}
\|\E(x)\xi\|    \leq  \sum_{j\in J} \|xp_j\xi\| + \|xq_c\xi\| \nonumber
 \leq  \|x\| \cdot \big(\sum_{j\in J} \|\chi_{A_j}\xi\|_1 + \|\chi_{B}\xi\|_1 \big) = \|x\| \cdot \|\xi\|_1,
\end{eqnarray*}
for any $\xi \in \LE$ by Lemma~\ref{norm isom}. It implies that $\E$ is a contraction in this case. When $p>1$, it follows from H\"{o}lder's inequality that
\begin{eqnarray*}
\big| \big\l  \sum_{j\in J} p_j^{(**)}xp_j\xi + q_c^{(**)}xq_c\xi, \eta \big\r  \big|
&\leq&  \sum_{j\in J} |\l xp_j\xi, p_j^*\eta \r| + |\l xq_c\xi, q_c^*\eta \r| \\
& \leq&  \|x\| \cdot \big(\sum_{j\in J} \|p_j\xi\|_p \cdot \|p_j^*\eta\|_q + \|q_c\xi\|_p \cdot\|q_c^*\eta\|_q \big) \\
& \leq & \|x\| \cdot \big(\sum_{j\in J} \|p_j\xi\|_p^p + \|q_c\xi\|_p^p\big)^{\frac{1}{p}} \cdot \big(\sum_{j\in J} \|p_j^*\eta\|_q^q + \|q_c^*\eta\|_q^q\big)^{\frac{1}{q}} \\
&  = &  \|x\| \cdot \|\xi\|_p \cdot \|\eta\|_q,
\end{eqnarray*}
for any $\xi \in \LpE$ and $\eta \in \LqEst$. This implies that
$$\|\E(x)\xi\|=\big\| \sum_{j\in J} p_j^{(**)}xp_j\xi + q_c^{(**)}xq_c\xi \big\| \leq \|x\| \cdot \|\xi\|_p$$
by Lemma \ref{norm isom}. Hence, $\E$ is a contraction in this case as well.

Now we show that the image of $\E$ sits inside $\G'$. Indeed, given any $x\in \BXXl$, any $u=\sum_{j\in J} (-1)^{\alpha_j} p_j + (-1)^\beta q_c \in G$, $\xi \in \X$ and $\eta \in \Xl$, we have that
\begin{eqnarray*}
\l \E(x)u\xi, \eta \r &=& \big\l \big( \sum_{j\in J} p_j^{(**)}xp_j + q_c^{(**)} x q_c\big) \big(\sum_{j\in J}(-1)^{\alpha_j} p_j +(-1)^\beta q_c \big)\xi, \eta \big\r\\
         &=& \sum_{j\in J} (-1)^{\alpha_j} \l p_j^{(**)}x p_j\xi,\eta \r + (-1)^\beta \l q_c^{(**)}xq_c\xi, \eta \r\\
         &=& \sum_{j\in J} (-1)^{\alpha_j} \l x p_j\xi, p_j^*\eta \r + (-1)^\beta \l xq_c\xi, q_c^*\eta \r.
\end{eqnarray*}
On the other hand,
\begin{eqnarray*}
\l u^{(**)}\E(x)\xi, \eta \r &=& \big\l \big( \sum_{j\in J} p_j^{(**)}xp_j + q_c^{(**)} x q_c\big)\xi, u^*\eta \big\r\\
         &=& \sum_{j\in J} \l x p_j\xi, p_j^*u^*\eta \r + \l xq_c\xi, q_c^*u^*\eta \r\\
         &=& \sum_{j\in J} (-1)^{\alpha_j} \l x p_j\xi, p_j^*\eta \r + (-1)^\beta \l xq_c\xi, q_c^*\eta \r.
\end{eqnarray*}
Hence, $\E(x)u=u^{(**)}\E(x)$ for all $u\in G$.

Next, we show that $\E(x)=x$ for all $x\in \G'$. In other words, $\E$ is an idempotent onto $\G'$. Fix an $x \in \G'$ and for any $u=\sum_{j\in J} (-1)^{\alpha_j} p_j + (-1)^\beta q_c$ in $G$, we have that
$$
p_j^{(**)}xp_i=(-1)^{\alpha_i}p_j^{(**)}x(up_i)=(-1)^{\alpha_i}(p_j^{(**)}u^{(**)})xp_i=(-1)^{\alpha_i+\alpha_j}p_j^{(**)}xp_i.
$$
It follows that $p_{j}^{(**)}xp_i=0$ for any $i\neq j$. Similarly, $p_j^{(**)}xq_c=q_c^{(**)}xp_{j}=0$ for any $j \in J$. Therefore, we have that
\begin{eqnarray*}
x =  \big( \sum_{i\in J}p_i +q_c \big)^{(**)}~x~\big( \sum_{j\in J}p_j +q_c \big)=\sum_{j\in J} p_j^{(**)}xp_j + q_c^{(**)}xq_c = \E(x)\  \text{for all $x\in \G'$.}
\end{eqnarray*}

Moreover, for any $b_1,b_2 \in G$ and any $x\in \BXXl$ we have that
\begin{eqnarray*}
  \E(b_1^{(**)}xb_2) &=& \sum_{j\in J} p_j^{(**)}b_1^{(**)}xb_2p_j + q_c^{(**)}b_1^{(**)}xb_2q_c\\
                  &=& \sum_{j\in J} b_1^{(**)}p_j^{(**)}xp_jb_2 + b_1^{(**)}q_c^{(**)}xq_cb_2\\ &=&b_1^{(**)}\E(x)b_2,
\end{eqnarray*}
where we use the fact that $b_kp_j=p_jb_k$ and
$b_kq_c=q_cb_k$ for any $j\in J$ and $k\in \{1,2\}$.

The final conclusion follows from the uniqueness of $\E$ in Lemma~\ref{unique expectation lemma} and \eqref{norm estimamte for cmmt} therein. So we finish the proof.
\end{proof}

\begin{proof}[Proof of Proposition \ref{estimate for cmmt cor}]
Let the group $G$ be defined as in (\ref{G}), and the map $\E: \Bd \rightarrow \G'$ be the idempotent defined in Lemma \ref{construction}. Recall that by Lemma~\ref{norm isom}, the map $\tau: \LpE \rightarrow \LqEst^*$ defined in (\ref{def for tau}) is an isometric embedding, hence it induces the following isometric embedding
$$\iota: \BLpE=\mathfrak{B}(\LpE,\LpE) \hookrightarrow \Bd.$$
In other words, $\iota(a)=\tau \circ a$ for any $a \in \BLpE$.

Now we define another map $\E': \BLpE \rightarrow \BLpE$ by the formula
$$\E'(z)=\sum_{j\in J} p_jzp_j + q_czq_c$$
for $z \in \BLpE$ and the sum converges in (SOT) by Lemma~\ref{SOT convergence lem}. It follows easily from Equation~(\ref{comm diagm}) that the following diagram commutes
\begin{displaymath}
\xymatrix@=1.5cm{
  \BLpE \ar@{^{(}->}[r]^-{\textstyle \iota} & \Bd  \\
  \BLpE \ar[u]^{\textstyle \E'} \ar@{^{(}->}[r]^-{\textstyle \iota} & \Bd.  \ar[u]_{\textstyle \E}  }
\end{displaymath}
Furthermore, we have that
$$\|\E'(z)-z\|=\|\iota(\E'(z))-\iota(z)\|=\|\E(\iota(z))-\iota(z)\| \leq \sup_{u \in G}\{\|\iota(z)u-u^{(**)}\iota(z)\|\},$$
for any $z \in \BLpE$.
While for $u \in G$, it follows from Equation~(\ref{comm diagm}) that
$$\iota(z)u-u^{(**)}\iota(z)=\tau z u-u^{(**)} \tau z=\tau z u-\tau u z.$$
Combining the above facts together, we obtain that
$$\|\E'(z)-z\| \leq \sup_{u \in G}\{\|zu-uz\|\}.$$

Let $e:=\sum_{j\in J} e_j$. Since $p_je=e_j=ep_j$ and $q_ce=eq_c=0$, we have that
\begin{eqnarray*}
  \E'(eae) &=& \sum_{j\in J} p_jeaep_j + q_ceaeq_c
   = \sum_{j\in J} e_jae_j.
\end{eqnarray*}
Also notice that for any $u=\sum_{j\in J} (-1)^{\alpha_j} p_j + (-1)^\beta q_c$ in $G$, we have that $eu=ue= \sum_{j \in J} (-1)^{\alpha_j} e_j.$ Since $\{A_j\}_{j \in J}$ are pairwise $2/L$-disjoint, there exists an $L$-Lipschitz map $f \in C_b(X)_1$ such that $f|_{A_j}\equiv (-1)^{\alpha_j}\chi_{A_j}$ for all $j$. Hence, $\|[a,f]\| \leq \varepsilon$ since $a\in \Commut(L,\varepsilon)$, and we clearly have $e_jf=fe_j=(-1)^{\alpha_j}e_j$. Therefore, we obtain that
\begin{align*}
  ueae &= \big( \sum_{j \in J} (-1)^{\alpha_j} e_j \big) ae
       = efae
       \approx_{\varepsilon}  eafe
       = ea\big( \sum_{j \in J} (-1)^{\alpha_j} e_j \big)= eaeu.
\end{align*}
Finally, we complete the proof by the following computation:
$$ \|eae-\sum_{j\in J}e_jae_j\| = \|\E'(eae)-eae\| \leq \sup_{u \in G} \|eaeu-ueae\| \leq \varepsilon,$$
for any $a\in \Commut(L,\varepsilon)$.
\end{proof}

{\bf Acknowledgments}. The first-named author would like to thank Tomasz Kania for helpful discussions on Banach space valued $L^p$-spaces.


\bibliographystyle{plain}
\bibliography{bibRELCMT}

\end{document}